%
%
\documentclass[fleqn,12pt,twoside]{article}


\usepackage[headings]{espcrc1}
\readRCS
$Id: espcrc1.tex,v 1.2 2004/02/24 11:22:11 spepping Exp $
\ProvidesFile{espcrc1.tex}[\filedate \space v\fileversion
     \space Elsevier 1-column CRC Author Instructions]


\usepackage{graphicx}
\usepackage[figuresright]{rotating}


\usepackage{amsmath}
\usepackage{amsfonts}
\usepackage{amssymb}
\usepackage{amsthm}

\usepackage{tikz}
\usetikzlibrary{arrows}
\usetikzlibrary{shapes}
\usetikzlibrary{decorations.markings}

\newtheorem{theorem}{Theorem}

\newtheorem{conjecture}[theorem]{Conjecture}
\newtheorem{corollary}[theorem]{Corollary}

\newtheorem{lemma}[theorem]{Lemma}

\newtheorem{problem}{Problem}

\newenvironment{acknowledgement}[1][Acknowledgement]{\begin{trivlist}
\item[\hskip \labelsep {\bfseries #1}]}{\end{trivlist}}

\hyphenation{author another created financial paper re-commend-ed Post-Script}
\numberwithin{theorem}{section}


\title{Further results on the deficiency of graphs}

\author{P.A. Petrosyan\address[MCSD]{Department of Informatics and Applied Mathematics,\\
Yerevan State University, 0025, Armenia}%
\address{Institute for Informatics and Automation Problems,\\
National Academy of Sciences, 0014, Armenia}%
\thanks {email: pet\_petros@ipia.sci.am, petros\_petrosyan@ysu.am},
        H.H. Khachatrian\addressmark[MCSD]%
\thanks{email: hrant.khachatrian@ysu.am}}


\runtitle{Further results on the deficiency of graphs}\runauthor{P.A. Petrosyan, H.H. Khachatrian}

\begin{document}

\maketitle

\begin{abstract}
A \emph{proper $t$-edge-coloring} of a graph $G$ is a mapping
$\alpha: E(G)\rightarrow \{1,\ldots,t\}$ such that all colors are
used, and $\alpha(e)\neq \alpha(e^{\prime})$ for every pair of
adjacent edges $e,e^{\prime}\in E(G)$. If $\alpha $ is a proper edge-coloring of a graph $G$ and $v\in
V(G)$, then \emph{the spectrum of a vertex $v$}, denoted by $S\left(v,\alpha \right)$, is the set of all colors appearing on edges incident to $v$. \emph{The deficiency of $\alpha$ at vertex $v\in V(G)$}, denoted by $def(v,\alpha)$, is the minimum number of integers which must be added to $S\left(v,\alpha \right)$ to form an interval, and \emph{the deficiency $def\left(G,\alpha\right)$ of a proper edge-coloring $\alpha$ of $G$} is defined as the sum $\sum_{v\in V(G)}def(v,\alpha)$. \emph{The deficiency of a graph $G$}, denoted by $def(G)$, is defined as follows:
$def(G)=\min_{\alpha}def\left(G,\alpha\right)$, where minimum is
taken over all possible proper edge-colorings of $G$. For a
graph $G$, the smallest and the largest values of $t$ for which it
has a proper $t$-edge-coloring $\alpha$ with deficiency
$def(G,\alpha)=def(G)$ are denoted by $w_{def}(G)$ and $W_{def}(G)$, respectively. In this paper, we obtain some bounds on $w_{def}(G)$ and $W_{def}(G)$. In particular, we show that for any $l\in \mathbb{N}$, there exists a graph $G$ such that $def(G)>0$ and $W_{def}(G)-w_{def}(G)\geq l$. It is known that for the complete graph $K_{2n+1}$, $def(K_{2n+1})=n$ ($n\in \mathbb{N}$). Recently, Borowiecka-Olszewska, Drgas-Burchardt and Ha\l uszczak posed the following conjecture on the deficiency of near-complete graphs: if $n\in \mathbb{N}$, then $def(K_{2n+1}-e)=n-1$. In this paper, we confirm this conjecture.\\

Keywords: proper edge-coloring, interval (consecutive) coloring,
deficiency, complete graph, near-complete graph.
\end{abstract}

\begin{center}
Dedicated to the memory of Haroutiun Khachatrian
\end{center}

\section{Introduction}\

All graphs considered in this paper are finite, undirected, and have no loops or multiple edges. Let $V(G)$ and $E(G)$ denote the sets of vertices and edges of $G$, respectively. The degree of a vertex $v\in V(G)$ is denoted by $d_{G}(v)$, the diameter of $G$ by $\mathrm{diam}(G)$, and the chromatic index of $G$ by
$\chi^{\prime}(G)$. For a graph $G$, let $\Delta(G)$ and $\delta(G)$ denote the maximum and minimum degrees of vertices in $G$, respectively. The terms and concepts that we do not define can be found in \cite{AsrDenHag,Kubale,West}.

A \emph{proper $t$-edge-coloring} of a graph $G$ is a mapping
$\alpha: E(G)\rightarrow \{1,\ldots,t\}$ such that all colors are
used, and $\alpha(e)\neq \alpha(e^{\prime})$ for every pair of
adjacent edges $e,e^{\prime}\in E(G)$. If $\alpha $ is a proper edge-coloring of a graph $G$ and $v\in
V(G)$, then \emph{the spectrum of a vertex $v$}, denoted by $S\left(v,\alpha \right)$, is the set of all colors appearing on edges incident to $v$. A proper $t$-edge-coloring $\alpha$ of a graph $G$ is an \emph{interval $t$-coloring} if for each vertex $v$ of $G$, the set $S\left(v,\alpha \right)$ is an interval of
integers. A graph $G$ is \emph{interval colorable} if it has an
interval $t$-coloring for some positive integer $t$. The set of all interval colorable graphs is denoted by $\mathfrak{N}$. For a graph $G\in \mathfrak{N}$, the smallest and the largest values of $t$ for which it has an interval $t$-coloring are denoted by $w(G)$ and $W(G)$, respectively. The concept of interval edge-coloring of graphs was introduced by Asratian and Kamalian \cite{AsrKam} in 1987. In \cite{AsrKam}, the authors proved that if $G\in \mathfrak{N}$, then $\chi^{\prime}\left(G\right)=\Delta(G)$.
Asratian and Kamalian also proved \cite{AsrKam,AsrKamJCTB} that if a triangle-free graph $G$ admits an interval $t$-coloring, then $t\leq \left\vert V(G)\right\vert -1$. In \cite{Kampreprint,KamDiss},
Kamalian investigated interval colorings of complete bipartite
graphs and trees. In particular, he proved that the complete
bipartite graph $K_{m,n}$ has an interval $t$-coloring if and only
if $m+n-\gcd(m,n)\leq t\leq m+n-1$, where $\gcd(m,n)$ is the
greatest common divisor of $m$ and $n$. In \cite{PetDM,PetKhachTan},
Petrosyan, Khachatrian and Tananyan proved that the $n$-dimesional
cube $Q_{n}$ has an interval $t$-coloring if and only if $n\leq t\leq \frac{n\left(n+1\right)}{2}$. The problem of determining whether or not a given graph is interval colorable is $NP$-complete, even for regular \cite{AsrKam} and bipartite \cite{Seva} graphs. In some papers
\cite{AsrKam,AsrKamJCTB,GiaroKubale1,GiaroKubale2,Hansen,HansonLotenToft,Kampreprint,KamDiss,Kubale,PetDM,PetKarapet,PetKhachTan,PetKhachYepTan,PetKhachMam,Seva},
the problems of the existence, construction and estimating of the numerical parameters of interval colorings of graphs were investigated.

It is known that there are graphs that have no interval colorings. A smallest example is $K_{3}$. Since not all graphs admit an
interval coloring, it is naturally to consider a measure of
closeness for a graph to be interval colorable. In
\cite{GiaroKubaleMalaf1}, Giaro, Kubale and Ma\l afiejski introduced such a measure which is called deficiency of a graph (another measure was suggested in \cite{PetSarg}). The
\emph{deficiency $def(G)$ of a graph $G$} is the minimum number of
pendant edges whose attachment to $G$ makes it interval colorable.
The concept of deficiency of graphs can be also defined using proper edge-colorings. \emph{The deficiency of a proper edge-coloring $\alpha$ at vertex $v\in V(G)$}, denoted by $def(v,\alpha)$, is the minimum number of integers which must be added to $S\left(v,\alpha \right)$ to form an interval, and \emph{the deficiency $def\left(G,\alpha\right)$ of a proper edge-coloring $\alpha$ of $G$} is defined as the sum $\sum_{v\in V(G)}def(v,\alpha)$. In fact, $def(G)=\min_{\alpha}def\left(G,\alpha\right)$, where minimum is
taken over all possible proper edge-colorings of $G$.
The problem of determining the deficiency of a graph is
$NP$-complete, even for regular and bipartite graphs
\cite{AsrKam,Seva,GiaroKubaleMalaf1}. In \cite{GiaroKubaleMalaf1},
Giaro, Kubale and Ma\l afiejski obtained some results on the
deficiency of bipartite graphs. In particular, they showed that
there are bipartite graphs whose deficiency approaches the number of vertices.  In \cite{GiaroKubaleMalaf2}, the same authors proved that if $G$ is an $r$-regular graph with an odd number of vertices, then $def(G)\geq \frac{r}{2}$, and determined the deficiency of odd cycles, complete graphs, wheels and broken wheels. In \cite{Schwartz}, Schwartz investigated the deficiency of regular graphs. In particular, he obtained tight bounds on the deficiency of regular graphs and proved that there are regular graphs with high deficiency. Bouchard, Hertz and Desaulniers \cite{BouchHertzDesau}
derived some lower bounds for the deficiency of graphs and provided a tabu search algorithm for finding a proper edge-coloring with minimum deficiency of a
graph. Recently, Borowiecka-Olszewska,
Drgas-Burchardt and Ha\l uszczak \cite{B-OD-BHal} studied the
deficiency of $k$-trees. In particular, they determined the
deficiency of all $k$-trees with maximum degree at most $2k$, where
$k\in \{2,3,4\}$. They also proved that the following lower bound
for $def(G)$ holds: if $G$ is a graph with an odd number of
vertices, then $def(G)\geq \frac{2\vert E(G)\vert -(\vert V(G)\vert
-1)\Delta(G)}{2}$. In the same paper, Borowiecka-Olszewska,
Drgas-Burchardt and Ha\l uszczak posed the following conjecture on
the deficiency of near-complete graphs: if $n\in \mathbb{N}$, then
$def(K_{2n+1}-e)=n-1$.

For a graph $G$, the smallest and the largest values of $t$ for which it has a proper $t$-edge-coloring $\alpha$ with deficiency
$def(G,\alpha)=def(G)$ are denoted by $w_{def}(G)$ and $W_{def}(G)$, respectively. In this paper, we obtain some bounds on $w_{def}(G)$ and $W_{def}(G)$. We also determine the deficiency of certain graphs. In particular, we confirm the above-mentioned conjecture of Borowiecka-Olszewska, Drgas-Burchardt and Ha\l uszczak.\\

\section{Notation, definitions and auxiliary results}\

We use standard notation $C_{n}$ and $K_{n}$ for the simple cycle
and complete graph on $n$ vertices, respectively. We also use
standard notation $K_{m,n}$ and $K_{l,m,n}$ for the complete
bipartite and tripartite graph, respectively, one part of which has
$m$ vertices, other part has $n$ vertices and a third part has $l$
vertices.

For two positive integers $a$ and $b$ with $a\leq b$, we denote by
$\left[a,b\right]$ the interval of integers
$\left\{a,a+1,\ldots,b-1,b\right\}$. If $a>b$, then $\left[a,b\right]=\emptyset$.

Let $A$ be a finite set of integers. The deficiency $def(A)$ of $A$
is the number of integers between $\min A$ and $\max A$ not
belonging to $A$. Clearly, $def(A)=\max A-\min A-\vert A\vert +1$. A set $A$ with $def(A)=0$ is an interval.

If $\alpha $ is a proper edge-coloring of a graph $G$ and $v\in
V(G)$, then the spectrum of a vertex $v$, denoted by $S\left(v,\alpha \right)$, is the set of colors appearing on edges incident to $v$. The smallest and largest colors
of $S\left(v,\alpha \right)$ are denoted by $\underline
S\left(v,\alpha \right)$ and $\overline S\left(v,\alpha \right)$,
respectively. If $\alpha $ is a proper edge-coloring of a graph $G$ and $V^{\prime}\subseteq V(G)$, then we can define $S\left(V^{\prime},\alpha\right)$ as follows: $S\left(V^{\prime},\alpha\right)=\bigcup_{v\in V^{\prime}}S\left(v,\alpha \right)$. The smallest and largest colors of $S\left(V^{\prime},\alpha\right)$ are denoted by $\underline S\left(V^{\prime},\alpha\right)$ and $\overline S\left(V^{\prime},\alpha\right)$, respectively. The deficiency of $\alpha$ at vertex $v\in V(G)$,
denoted by $def(v,\alpha)$, is defined as follows:
$def(v,\alpha)=def\left(S\left(v,\alpha \right)\right)$. The
deficiency of a proper edge-coloring $\alpha$ of $G$ is defined as
the sum $def\left(G,\alpha\right)=\sum_{v\in V(G)}def(v,\alpha)$.
For a graph $G$, define the deficiency $def(G)$ as follows:
$def(G)=\min_{\alpha}def\left(G,\alpha\right)$, where minimum is
taken over all possible proper edge-colorings of $G$. Clearly,
$def(G)=0$ if and only if $G\in\mathfrak{N}$. Also, it is easy to
see that $def(G)$ can be defined as the minimum number of pendant
edges whose attachment to $G$ makes it interval colorable. For a
graph $G$, the smallest and the largest values of $t$ for which it
has a proper $t$-edge-coloring $\alpha$ with deficiency
$def(G,\alpha)=def(G)$ are denoted by $w_{def}(G)$ and $W_{def}(G)$,
respectively. Clearly, if for a graph $G$, $def(G)=0$, then
$w_{def}(G)=w(G)$ and $W_{def}(G)=W(G)$.\\

We will use the following five results.

\begin{theorem}
\label{mytheorem1.1}\cite{AsrKam,AsrKamJCTB} If $G\in \mathfrak{N}$, then $\chi^{\prime}(G)=\Delta(G)$. Moreover, if $G$ is a regular graph, then $G\in \mathfrak{N}$ if and only if
$\chi^{\prime}(G)=\Delta(G)$.
\end{theorem}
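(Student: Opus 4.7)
The plan is to handle the two assertions separately; both rest on the observation that in an interval coloring the spectrum at each vertex $v$ is an interval of exactly $d_G(v)$ consecutive integers.

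For the first assertion, the inequality $\chi'(G)\geq \Delta(G)$ is immediate from the definitions, so I only need to produce a proper $\Delta(G)$-edge-coloring from a given interval $t$-coloring $\alpha$ of $G$. The natural candidate is the modular reduction $\beta(e)=((\alpha(e)-1)\bmod \Delta(G))+1$, which takes values in $[1,\Delta(G)]$. I would verify properness vertex by vertex: for any $v\in V(G)$, the set $S(v,\alpha)$ is an interval of length $d_G(v)\leq \Delta(G)$, so any two distinct colors on edges incident to $v$ differ by strictly less than $\Delta(G)$ and therefore have distinct residues modulo $\Delta(G)$. Hence $\beta$ is a proper $\Delta(G)$-edge-coloring, giving $\chi'(G)\leq \Delta(G)$, and equality follows. (A minor detail is to observe that if $G$ has an edge then $\Delta(G)\geq 1$, so the modular arithmetic makes sense.)

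For the moreover part, the forward direction is just a restatement of the first assertion. For the reverse direction, suppose $G$ is $r$-regular with $\chi'(G)=r=\Delta(G)$, and let $\alpha$ be a proper $r$-edge-coloring. Every vertex $v$ is incident to $r$ edges, each of which must receive a distinct color chosen from $\{1,\ldots,r\}$; hence $S(v,\alpha)=\{1,\ldots,r\}=[1,r]$ is an interval. Thus $\alpha$ is itself an interval $r$-coloring, so $G\in\mathfrak{N}$.

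There is no real obstacle in this argument; the only point requiring care is the modular-reduction step, where I must keep track of the shift by $1$ so the colors land in $\{1,\ldots,\Delta(G)\}$ and simultaneously check that the length of the spectrum interval at each vertex is bounded by $\Delta(G)$ (not by $t$, which is larger in general). Once that bookkeeping is in place, both implications follow directly, without needing Vizing's theorem or any other edge-coloring result.
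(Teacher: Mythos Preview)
Your argument is correct. The paper itself does not prove this theorem; it merely cites it as a known result from \cite{AsrKam,AsrKamJCTB}, so there is no in-paper proof to compare against. Your modular-reduction trick is in fact the standard Asratian--Kamalian argument: reduce an interval $t$-coloring modulo $\Delta(G)$ to get a proper $\Delta(G)$-edge-coloring, and for the converse observe that a proper $\Delta(G)$-edge-coloring of a $\Delta(G)$-regular graph is already an interval coloring. One minor terminological point worth recording: under the paper's convention a ``proper $t$-edge-coloring'' must use all $t$ colors, but since an interval $t$-coloring of $G$ necessarily has $t\geq \Delta(G)$ (look at a vertex of maximum degree), your map $\beta$ is surjective onto $[1,\Delta(G)]$ and the convention is satisfied.
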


\begin{theorem}
\label{mytheorem1.2}\cite{AsrKam,AsrKamJCTB} If $G$ is a regular graph and $G\in \mathfrak{N}$, then for every $t$, $w(G) \leq t \leq W(G)$, $G$ has an interval $t$-coloring.
\end{theorem}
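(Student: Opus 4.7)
The plan is to establish a single monotone reduction: from any interval $t$-coloring of $G$ with $t > w(G)$, produce an interval $(t-1)$-coloring. Starting from an interval $W(G)$-coloring (which exists by definition of $W(G)$) and iterating this step-down yields interval $t$-colorings for every $t \in [w(G), W(G)]$, proving the theorem. The entire argument therefore hinges on this one construction.

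First I would pin down $w(G)$. Write $r = \Delta(G)$. Since $G \in \mathfrak{N}$, Theorem~\ref{mytheorem1.1} gives $\chi'(G) = r$. Any proper $r$-edge-coloring $\alpha$ of the $r$-regular graph $G$ uses each of the $r$ colors exactly once at every vertex, so $S(v,\alpha) = [1,r]$ for all $v$; this is already an interval coloring, so $w(G) = r$.

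For the step-down, let $\alpha$ be an interval $t$-coloring with $t > r$, and set $A = \{v \in V(G) : t \in S(v,\alpha)\}$. Regularity forces $S(v,\alpha) = [t-r+1,t]$ for every $v \in A$, so no such $v$ has an incident edge of color $t-r$ (and $t-r \geq 1$ because $t > r$). Every edge of color $t$ has both endpoints in $A$, and these edges form a matching. I would then define $\beta$ from $\alpha$ by recoloring every edge of color $t$ with color $t-r$, keeping all other edges unchanged. Properness holds because each $v \in A$ previously had no edge of color $t-r$; for $v \notin A$ the spectrum is untouched; and for $v \in A$ the new spectrum is $(S(v,\alpha) \setminus \{t\}) \cup \{t-r\} = [t-r, t-1]$, still an interval. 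All colors in $[1, t-1]$ remain in use, so $\beta$ is an interval $(t-1)$-coloring.

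The delicate point is choosing a recoloring operation that preserves the interval property at every affected vertex simultaneously. The identity $S(v,\alpha) = [t-r+1, t]$ for $v \in A$, forced by regularity, is what guarantees that replacing $t$ by $t-r$ lands just below the old interval and extends it into a new contiguous block $[t-r, t-1]$. For non-regular $G$, vertices containing color $t$ could have arbitrary spectra and no uniform shift would work. This also explains why the companion direction $t \to t+1$ needs no separate treatment: iterating the step-down from $W(G)$ already covers every integer in $[w(G), W(G)]$.
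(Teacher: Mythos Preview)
Your proof is correct. The paper does not prove Theorem~\ref{mytheorem1.2} directly (it is cited from \cite{AsrKam,AsrKamJCTB}), but it does prove the generalization Theorem~\ref{mytheorem2.5} and then notes that Theorem~\ref{mytheorem1.2} is the special case where $\alpha_0$ is an interval $W(G)$-coloring and $D$ is any singleton. The mechanism there is exactly yours: recolor edges whose color lies outside the target range by shifting their color by $\Delta(G)$, which for a regular graph moves each affected spectrum to an adjacent interval of the same length. The only cosmetic difference is that the paper shifts an entire block of top colors $[t+1,t_0]$ down by $\Delta(G)$ in one pass (and, if needed, a block of bottom colors up), whereas you peel off the single top color $t$ and iterate; unrolling the paper's batch shift gives precisely your step-down.
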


\begin{theorem}
\label{mytheorem1.3}\cite{AsrKam,AsrKamJCTB} If $G$ is a
triangle-free graph and $G\in \mathfrak{N}$, then $W(G)\leq \vert
V(G)\vert -1$.
\end{theorem}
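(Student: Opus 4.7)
My plan is to argue by contradiction: assume $\alpha$ is an interval $t$-coloring of $G$ with $t\geq n$, where $n=\vert V(G)\vert$.

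First I would establish a span bound tailored to triangle-free graphs: for every edge $uv\in E(G)$, the spectra $S(u,\alpha)$ and $S(v,\alpha)$ are intervals sharing the color $\alpha(uv)$, so their union is an interval of length at most $\vert S(u,\alpha)\vert+\vert S(v,\alpha)\vert-1=d_{G}(u)+d_{G}(v)-1$. Triangle-freeness forces $N_{G}(u)\cap N_{G}(v)=\emptyset$ for any edge $uv$, which combined with $v\in N_{G}(u)$ and $u\in N_{G}(v)$ gives $d_{G}(u)+d_{G}(v)\leq n$. Hence
\[
\overline{S}(\{u,v\},\alpha)-\underline{S}(\{u,v\},\alpha)\leq n-2
\]
for every edge $uv\in E(G)$.

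Next, set $V_{L}=\{v\in V(G):\underline{S}(v,\alpha)=1\}$ and $V_{U}=\{v\in V(G):\overline{S}(v,\alpha)=t\}$; both are nonempty because colors $1$ and $t$ are used. Any edge $uv$ with $u\in V_{L}$ and $v\in V_{U}$ would satisfy $\overline{S}(\{u,v\},\alpha)-\underline{S}(\{u,v\},\alpha)\geq t-1\geq n-1$, contradicting the span bound; so no such edge exists. Similarly $V_{L}\cap V_{U}=\emptyset$, since a common vertex would have $d_{G}(v)=t\geq n$, impossible in a simple graph on $n$ vertices. Thus $V_{L}$ and $V_{U}$ are disjoint and non-adjacent.

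The main expected obstacle is deriving a contradiction from this configuration. I would proceed by color accounting: every edge incident to $V_{L}$ has color in $[1,\max_{v\in V_{L}}d_{G}(v)]\subseteq[1,n-1]$, and every edge incident to $V_{U}$ has color in $[t-\max_{v\in V_{U}}d_{G}(v)+1,t]\subseteq[2,t]$, so any color falling in neither range must be realized by an edge contained entirely inside $V_{M}:=V(G)\setminus(V_{L}\cup V_{U})$. Combining this with iterated applications of the span bound along a shortest path from $V_{L}$ to $V_{U}$ through $V_{M}$ and using the degree bound $d_{G}(v)\leq t-2$ for $v\in V_{M}$ (forced by $\underline{S}(v,\alpha)\geq2$ and $\overline{S}(v,\alpha)\leq t-1$) together with the fact that each color class is a matching, I would aim to obtain a sum-of-degrees inequality that is incompatible with $t\geq n$. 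A backup plan is a greedy walk from a vertex $v_{0}\in V_{U}$ that at each step traverses the smallest-color edge at the current vertex (avoiding immediate back-tracking); monotonicity of $\underline{S}(\cdot,\alpha)$ along the walk together with finiteness of $V(G)$ should force the walk into $V_{L}$, after which applying the span bound to a well-chosen boundary edge along the walk would yield the desired contradiction.
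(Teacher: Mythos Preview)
The paper does not give its own proof of this theorem; it is quoted as a known result from \cite{AsrKam,AsrKamJCTB}. So there is no in-paper argument to compare against, and the relevant benchmark is the original Asratian--Kamalian proof. With that in mind, your proposal has a genuine gap.

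Your opening observation is exactly the right local ingredient: triangle-freeness gives $d_G(u)+d_G(v)\le n$ for every edge $uv$, and hence the spectra of the two endpoints together span at most $n-1$ consecutive colors. Your deductions that $V_L$ and $V_U$ are nonempty, disjoint and non-adjacent are also correct. The problem is the passage from this local edge bound to the global bound $t\le n-1$; neither of the two routes you sketch actually closes. For the first route, the per-edge span bound does not telescope: applying it to each edge of a path $v_0v_1\ldots v_k$ gives $k$ separate inequalities, each of the form $\overline S(\{v_i,v_{i+1}\},\alpha)-\underline S(\{v_i,v_{i+1}\},\alpha)\le n-2$, and there is no mechanism that turns these into a single inequality bounding $\overline S(v_k,\alpha)-\underline S(v_0,\alpha)$. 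The extra remarks about color classes being matchings and about $d_G(v)\le t-2$ for $v\in V_M$ do not supply that mechanism. For the backup route, even granting that the greedy min-color walk starting in $V_U$ reaches $V_L$ (which you have not shown --- the walk can stall at a vertex whose incoming edge is already its minimum-color edge without that color being $1$), the ``boundary edge'' you would then examine has one end in $V_L$ and the other in $V_M$, not in $V_U$; applying the span bound to it yields only $\overline S(\text{other end},\alpha)\le n-1$, which says nothing about $t$.

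What makes the Asratian--Kamalian argument go through is not a single application of the edge span bound but a telescoping degree count along a carefully produced \emph{simple} path joining a color-$1$ edge to a color-$t$ edge (compare the proof of Theorem~\ref{mytheorem2.2} in this paper, which explicitly follows their idea): one obtains $t\le 1+\sum_{v\in V(P)}(d_G(v)-1)$, and the triangle-free hypothesis together with the shortest-path property is then used to bound the right-hand side by $n-1$ via a neighborhood-disjointness count over the path. Your proposal contains the right ingredients (greedy walk, degree bound on adjacent pairs) but does not assemble them into this telescoping estimate, and the ``apply the span bound once at the end'' plan cannot substitute for it.
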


\begin{theorem}
\label{mytheorem1.4}\cite{Axen} If $G$ is a planar graph and $G\in \mathfrak{N}$, then $W(G)\leq \frac{11}{6}\vert V(G)\vert$.
\end{theorem}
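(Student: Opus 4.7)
The plan is to reduce the bound on $W(G)$ to an estimate on the sum of degrees along a shortest path, in the spirit of the triangle-free case (Theorem \ref{mytheorem1.3}), and then to use Euler's formula for planar graphs to bring the constant down to $\frac{11}{6}$.

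I would first fix an interval $W(G)$-coloring $\alpha$ of $G$ and pick $u,v\in V(G)$ with $\underline S(u,\alpha)=1$ and $\overline S(v,\alpha)=W(G)$. Since each spectrum is an interval whose size equals the vertex degree, this forces $S(u,\alpha)=[1,d_G(u)]$ and $S(v,\alpha)=[W(G)-d_G(v)+1,W(G)]$. Let $P\colon u=w_0,w_1,\dots,w_k=v$ be a shortest $u$-$v$ path in $G$; in particular the $w_i$ are distinct, so $k+1\leq|V(G)|$.

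The next step is a \emph{color-shift inequality}. The color of the edge $w_iw_{i+1}$ lies in both intervals $S(w_i,\alpha)$ and $S(w_{i+1},\alpha)$, so $\overline S(w_{i+1},\alpha)\leq \overline S(w_i,\alpha)+d_G(w_{i+1})-1$. Starting from $\overline S(w_0,\alpha)=d_G(u)$ and telescoping,
\[
W(G)\;=\;\overline S(w_k,\alpha)\;\leq\;\sum_{i=0}^{k}d_G(w_i)\;-\;k.
\]
This inequality uses no planarity and is essentially the argument underlying Theorem \ref{mytheorem1.3}; its proof is by a direct induction on $i$.

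To convert this into the desired bound I would show $\sum_{i=0}^{k}d_G(w_i)-k\leq \frac{11}{6}|V(G)|$ by a discharging argument on a planar embedding of $G$. Starting from the Euler-formula identity $\sum_{w\in V(G)}(d_G(w)-6)+\sum_{f}(2\ell(f)-6)=-12$ (where $\ell(f)$ denotes the length of a face $f$), one assigns these quantities as initial charges and redistributes them so that the charge deficit is absorbed by the vertices of $P$. Crucially, because $P$ is a geodesic there is no chord $w_iw_j$ with $|i-j|\geq 2$, which constrains the faces meeting $P$ enough to yield the coefficient $\frac{11}{6}$ in place of the crude coefficient $6$ obtained from $\sum_w d_G(w)\leq 6|V(G)|-12$.

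The main obstacle is precisely this constant. The naive estimate $\sum_{i=0}^{k}d_G(w_i)\leq 2|E(G)|\leq 6|V(G)|-12$ only yields $W(G)\leq 5|V(G)|-11$, which is almost three times too large; the gain from $5$ down to $\frac{11}{6}$ depends entirely on a careful choice of discharging rules that exploit both the geodesic property of $P$ and the planarity of the whole of $G$. I expect the tightest part of the argument to be the treatment of the faces that share two consecutive vertices with $P$, where the averaging between high-degree spine vertices and their low-degree off-path neighbors must be balanced exactly.
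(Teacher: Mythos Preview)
The paper does not prove this theorem at all: it is quoted as one of five auxiliary results (Theorems~\ref{mytheorem1.1}--\ref{mytheorem1.5}) imported from the literature, with the citation \cite{Axen} and no accompanying argument. So there is no ``paper's own proof'' to compare your proposal against.

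As for your sketch itself, the first half is solid and standard: the telescoping bound $W(G)\le \sum_{i=0}^{k} d_G(w_i)-k$ along a geodesic is exactly the mechanism behind the triangle-free bound (Theorem~\ref{mytheorem1.3}) and is also what Axenovich starts from. The second half, however, is not yet a proof. You correctly identify that the whole difficulty is in pushing the constant from roughly $5$ down to $\tfrac{11}{6}$, and you name ``discharging'' as the tool, but you do not specify any discharging rules, nor do you indicate how the geodesic condition on $P$ interacts with the face structure to produce precisely $\tfrac{11}{6}$. In Axenovich's argument the key is not a generic discharging on the whole planar graph but a careful count of how many \emph{off-path} neighbours each $w_i$ can have, exploiting that $P$ is chordless and that the planar drawing restricts how many common neighbours consecutive path vertices can share. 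Without those rules written down and verified, what you have is an outline of where the work lies rather than a proof; the step ``one redistributes the charges so that the deficit is absorbed by the vertices of $P$'' is exactly the content of the theorem and cannot be left as an expectation.
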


\begin{theorem}
\label{mytheorem1.5}\cite{GrzesikKhach} For any $m,n\in \mathbb{N}$, $K_{1,m,n}\in \mathfrak{N}$ if and only if $\gcd(m+1,n+1)=1$. Moreover, if $\gcd(m+1,n+1)=1$, then $w\left(K_{1,m,n}\right)=m+n$.
\end{theorem}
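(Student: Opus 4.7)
My plan is to prove the two implications separately: necessity via a modular double-counting argument, and sufficiency by an explicit construction that leverages the coprimality.

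For necessity, I would assume $\alpha$ is an interval coloring of $K_{1,m,n}$ with center $v_0$ and parts $U = \{u_1,\dots,u_m\}$, $W = \{w_1,\dots,w_n\}$, and after a colour shift take $S(v_0,\alpha) = [1, m+n]$. Fix a prime $p$ dividing $d := \gcd(m+1, n+1)$. Because $|S(u_i,\alpha)| = n+1$ and $|S(w_j,\alpha)| = m+1$ are both divisible by $p$ and every spectrum is an interval, each $S(u_i,\alpha)$ contains exactly $(n+1)/p$ colours of each residue class mod $p$, and each $S(w_j,\alpha)$ contains exactly $(m+1)/p$. Setting $f(r) := |\{c \in [1,m+n] : c \equiv r \pmod p\}|$ and $N_r := |\{e \in E(K_{1,m,n}) : \alpha(e) \equiv r \pmod p\}|$, double-counting (each edge colour appears in exactly two vertex-spectra) yields
\[ 2N_r \;=\; f(r) \;+\; \frac{m(n+1)+n(m+1)}{p}. \]
Since $m \equiv n \equiv -1 \pmod p$ gives $m+n \equiv -2 \pmod p$: for odd $p$ the values $f(r)$ take two consecutive integers as $r$ varies, forcing $2N_r$ to flip parity between different residues, which is impossible; for $p=2$ the function $f(r)$ is constant equal to $(m+n)/2$, and the right-hand side simplifies to $mn+m+n$, which is odd whenever $m,n$ are both odd. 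Either conclusion contradicts $2\mid 2N_r$, so $d = 1$.

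For sufficiency, assuming $\gcd(m+1,n+1) = 1$, I would construct an interval $(m+n)$-coloring $\alpha$ explicitly; combined with the lower bound $\Delta(K_{1,m,n}) = m+n$, this would give both interval colorability and $w(K_{1,m,n}) = m+n$. The construction would fix $S(v_0,\alpha) = [1, m+n]$ by choosing an interleaving of the $u_i$ and $w_j$ along $[1,m+n]$, and then assign each edge $u_iw_j$ via a cyclic shift pattern whose consistency follows from a B\'ezout witness $a(m+1) - b(n+1) = \pm 1$; the coprimality prevents short periods that would cause colour collisions or leave gaps in the vertex spectra. I anticipate the main obstacle to be verifying, uniformly in $m$ and $n$, that each $S(u_i,\alpha)$ and $S(w_j,\alpha)$ is indeed a length-$(n+1)$ or length-$(m+1)$ interval containing the designated $\alpha(v_0 u_i)$ or $\alpha(v_0 w_j)$; boundary indices and the interface between the ``left'' and ``right'' portions of $[1,m+n]$ demand careful bookkeeping.
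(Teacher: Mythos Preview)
The paper does not prove this statement: Theorem~\ref{mytheorem1.5} is quoted without proof from \cite{GrzesikKhach} as one of the auxiliary results in Section~2, so there is no in-paper argument to compare against.

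On the merits of your proposal itself: the necessity direction is a clean and correct argument. Your modular double-count exploits the fact that every non-central vertex has a spectrum of length divisible by $p$, so the residue statistics are forced entirely onto $S(v_0,\alpha)$; the parity contradiction you derive in both the odd-$p$ and $p=2$ cases is valid. This is a self-contained proof of the ``only if'' half.

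The sufficiency direction, however, is not yet a proof---it is a plan with the construction left unspecified. Saying you will ``assign each edge $u_iw_j$ via a cyclic shift pattern whose consistency follows from a B\'ezout witness'' does not pin down $\alpha(u_iw_j)$, and the hard part of the Grzesik--Khachatrian result is precisely producing an explicit colouring that works for all coprime pairs $(m+1,n+1)$. Your anticipated obstacle (verifying that every $S(u_i,\alpha)$ and $S(w_j,\alpha)$ is a genuine interval, especially near the boundary of $[1,m+n]$) is the real content of the construction, and nothing in the proposal addresses it. Until you write down the formula for $\alpha(u_iw_j)$ and check properness and the interval property, the ``if'' half and the claim $w(K_{1,m,n})=m+n$ remain unproved.
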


We also need the following lemma on a special interval coloring of the complete bipartite graph $K_{p,p}$.

\begin{lemma}
\label{Kpp}
If $K_{p,p}$ is a complete bipartite graph with bipartition $(X,Y)$, where $X=\{x_{1},\ldots,x_{p}\}$ and $Y=\{y_{1},\ldots,y_{p}\}$, then $K_{p,p}$ has an interval coloring $\beta$ such that $\underline{S}(x_i,\beta) = \underline{S}(y_i,\beta) = \left\lfloor \frac{i}{2}\right\rfloor + 1$ for $1\leq i\leq p$.
\end{lemma}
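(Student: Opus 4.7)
I would represent $\beta$ as a $p\times p$ matrix $M$ with $M_{ij} = \beta(x_iy_j)$. The lemma then translates to the following matrix condition: row $i$ is a permutation of the interval $[f(i),f(i)+p-1]$ and column $j$ is a permutation of $[f(j),f(j)+p-1]$, where $f(k):=\lfloor k/2\rfloor+1$. Properness, the interval property at every vertex, and the correct value of $\underline{S}$ at each vertex are then all automatic.

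My plan is to obtain $M$ by modifying the standard interval $(2p-1)$-coloring $\gamma(x_iy_j)=i+j-1$, whose spectra are $[i,i+p-1]$ at $x_i$ and $[j,j+p-1]$ at $y_j$. Since $i-f(i)=\lceil i/2\rceil-1$, I want to shift the spectrum of $x_i$ (resp.\ $y_j$) downward by $\lceil i/2\rceil-1$ (resp.\ $\lceil j/2\rceil-1$). The natural move is to replace each ``overflowing'' color $c\geq f(i)+p$ in row $i$ by $c-p$, which correctly brings row $i$'s spectrum down to $[f(i),f(i)+p-1]$. After grouping the vertices into the pairs $\{x_1\},\{x_2,x_3\},\{x_4,x_5\},\ldots$ on each side (and analogously on the $Y$-side), the idea is that within each $2\times 2$ pair-block of the matrix the entries should be cyclically permuted versions of each other, so that the two rows (and the two columns) of a pair end up with the same target interval and the shifted colors are distributed one per row and one per column of the block.

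The main obstacle is to verify that the row- and column-based shifts are mutually compatible, i.e.\ no column acquires a duplicate color or misses a color from its target interval. I would carry out this check by a case analysis on the parities of $i$ and $j$ (singleton vs.\ pair positions), showing that the set $\{M_{ij}:1\leq i\leq p\}$ for each fixed $j$ is exactly $[f(j),f(j)+p-1]$. An equivalent and arguably cleaner route is induction on $p$ in steps of two: the base cases $p=1,2$ are immediate; in the inductive step, a valid coloring of $K_{p-2,p-2}$ is extended by assigning colors to the $4p-4$ new edges incident to $x_{p-1},x_p,y_{p-1},y_p$ so that each old spectrum is augmented by the two extra colors $f(k)+p-2$ and $f(k)+p-1$ (one per new edge) and the two new vertices on each side obtain the intervals $[f(p-1),f(p-1)+p-1]$ and $[f(p),f(p)+p-1]$. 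In this inductive framing, the compatibility check reduces to distributing the pair $\{f(k)+p-2,f(k)+p-1\}$ between the two new rows for each old column $k$, which is handled by an explicit pairing rule that uses the parity of $k$.
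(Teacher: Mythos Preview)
Your proposal has a genuine gap: the inductive route, which you present as the cleaner of the two, breaks down already at $p=5$. By your plan, each old column $y_k$ (for $1\le k\le p-2$) must receive precisely the two new colors $f(k)+p-2$ and $f(k)+p-1$ on its edges to $x_{p-1}$ and $x_p$. For $p=5$ this means column $1$ needs $\{4,5\}$ while columns $2$ and $3$ each need $\{5,6\}$; hence color $5$ must occur three times in the $2\times 3$ block formed by rows $4,5$ and columns $1,2,3$, which is impossible since each of the two rows can contain $5$ at most once. No ``explicit pairing rule that uses the parity of $k$'' can repair this, because the obstruction is a multiplicity count: whenever $p\ge 5$ is odd, the singleton $\{x_1\}$ and the pair $\{x_2,x_3\}$ have $f(1)+p-1=f(2)+p-2=f(3)+p-2$, forcing three copies of that color into two rows. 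Your first route has the same defect: the row-based shift $c\mapsto c-p$ for $c\ge f(i)+p$ leaves, e.g.\ for $p=4$, column $3$ with spectrum $\{3,4,5,6\}$ instead of the required $[2,5]$, and the ``$2\times 2$ pair-block'' correction you allude to cannot be made to work for the singleton index $1$ interacting with the pair $\{2,3\}$, for the same counting reason.

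The paper avoids this trap by \emph{not} trying to control every edge color. It explicitly assigns only the colors in $[\,1,\lfloor p/2\rfloor\,]\cup[\,p+1,p+\lfloor p/2\rfloor\,]$, via $\beta(x_iy_j)=k$ when $i+j=2k$ and $\beta(x_iy_j)=k+p$ when $i+j=2k+p$ (for $1\le k\le\lfloor p/2\rfloor$); this already gives each vertex $x_i$ (and $y_i$) the two end-pieces $[\lfloor i/2\rfloor+1,\lfloor p/2\rfloor]$ and $[p+1,\lfloor i/2\rfloor+p]$ of its target interval. The remaining uncolored edges form a $\lceil p/2\rceil$-regular bipartite graph, so K\"onig's theorem supplies a proper $\lceil p/2\rceil$-edge-coloring, which, shifted into $[\lfloor p/2\rfloor+1,p]$, fills the common middle segment of every spectrum. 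The key idea you are missing is this non-constructive use of K\"onig to handle the ``middle'' colors uniformly, rather than attempting an explicit edge-by-edge assignment.
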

\begin{proof}
We construct an edge-coloring $\beta$ of $K_{p,p}$ in the following way: for every $1\leq k\leq\left\lfloor\frac{p}{2} \right\rfloor$, let

\begin{center}
$\beta\left(x_{i}y_{j}\right)=\left\{
\begin{tabular}{ll}
$k$, & if $i+j=2k$,\\
$k+p$, & if $i+j=2k+p$.\\
\end{tabular}%
\right.$
\end{center}

At this point exactly $\left\lfloor \frac{p}{2} \right\rfloor$ edges incident to each vertex are colored. Let $H$ denote the subgraph of $K_{p,p}$ which contains all vertices of $K_{p,p}$ but only its non-colored edges. Clearly, $H$ is a $\left\lceil \frac{p}{2}\right\rceil$-regular bipartite graph, so, by K\"onig's edge-coloring theorem, it has a proper $\left\lceil \frac{p}{2}\right\rceil$-edge-coloring $\beta'$. We complete the edge-coloring $\beta$ of $K_{p,p}$ by shifting the colors from $\beta'$ by $\left\lfloor \frac{p}{2} \right\rfloor$. For every $e \in E(H)$, let
\begin{align*}
\beta(e) = \beta'(e) + \left\lfloor \frac{p}{2} \right\rfloor.
\end{align*}

The resulting spectrums of the vertices $x_i$ and $y_i$ ($1\leq i\leq p$) are the following:
\begin{align*}
    S(x_i,\beta) = S(y_i,\beta) = \left[\left\lfloor \frac{i}{2} \right\rfloor + 1,\left\lfloor \frac{p}{2} \right\rfloor \right] \cup \left[ 1+p, \left\lfloor \frac{i}{2} \right\rfloor + p \right] \cup \left[ \left\lfloor \frac{p}{2} \right\rfloor + 1, p \right],
\end{align*}
where the last interval comes from the subgraph $H$ and is common for all vertices of $K_{p,p}$. The union of these intervals is
\begin{align*}
    S(x_i,\beta) = S(y_i,\beta) = \left[\left\lfloor \frac{i}{2} \right\rfloor + 1, \left\lfloor \frac{i}{2} \right\rfloor + p \right] (1\leq i\leq p).
\end{align*}
\end{proof}

Note that this lemma is a partial case of Lemma 4 from \cite{TePet}.
\bigskip

\section{Some bounds on $w_{def}(G)$ and $W_{def}(G)$}\

Recently, in \cite{AltinCaporHertz}, Altinakar, Caporossi and Hertz proved that if $G$ has at least three vertices, then $W_{def}(G)\leq 2\vert V(G)\vert -4+def(G)$. Here, we provide some other bounds for the parameters $w_{def}(G)$ and $W_{def}(G)$ depending on the number of vertices, degrees and diameter for connected, triangle-free and planar graphs.

\begin{theorem}
\label{mytheorem2.1} Let $\mathfrak{C}$ be a class of graphs closed under the operation of attaching of pendant edges. If
$W(G^{\prime})\leq
f(\vert V(G^{\prime})\vert)$ holds for any graph $G^{\prime}\in
\mathfrak{C} \cap \mathfrak{N}$, then for any graph $G\in \mathfrak{C}$, we have
\begin{center}
$W_{def}(G)\leq f\left(\vert V(G)\vert+def(G)\right)$.
\end{center}
\end{theorem}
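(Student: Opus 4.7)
The plan is to exploit the dual characterization of $def(G)$: it is both the minimum over proper colorings $\alpha$ of $def(G,\alpha)$, and the minimum number of pendant edges whose attachment makes $G$ interval colorable. These two viewpoints can be reconciled by a direct construction that turns any optimal coloring $\alpha$ into an interval coloring of a supergraph obtained by attaching exactly $def(G)$ pendant edges.

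Concretely, I would fix a proper $t$-edge-coloring $\alpha$ of $G$ with $t=W_{def}(G)$ and $def(G,\alpha)=def(G)$. For every vertex $v\in V(G)$ and every integer $c$ in the gap set $[\underline{S}(v,\alpha),\overline{S}(v,\alpha)]\setminus S(v,\alpha)$, I attach a new pendant edge at $v$ and color it with $c$. The total number of pendant edges attached equals
\[
\sum_{v\in V(G)} def(v,\alpha) \;=\; def(G,\alpha) \;=\; def(G),
\]
so the resulting graph $G'$ satisfies $|V(G')|=|V(G)|+def(G)$. Since each new color $c$ used on a pendant edge at $v$ is distinct from every color already present at $v$ (by choice of $c$) and the other endpoint of the pendant edge has degree one, the extended coloring $\alpha'$ is a proper edge-coloring of $G'$.

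Next I verify that $\alpha'$ is in fact an interval coloring of $G'$. At each original vertex $v$, the spectrum $S(v,\alpha')$ equals $[\underline{S}(v,\alpha),\overline{S}(v,\alpha)]$, which is an interval by construction; at each new pendant vertex the spectrum is a single color and hence trivially an interval. Moreover, since $\alpha$ already used every color in $\{1,\dots,t\}$ and the pendant colors lie inside the union of original vertex spectra, the coloring $\alpha'$ uses exactly the color set $\{1,\dots,t\}$, i.e.\ it is an interval $t$-coloring of $G'$. Closure of $\mathfrak{C}$ under attaching pendant edges gives $G'\in\mathfrak{C}$, and the existence of $\alpha'$ gives $G'\in\mathfrak{N}$, so $G'\in\mathfrak{C}\cap\mathfrak{N}$. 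Applying the hypothesis then yields
\[
W_{def}(G) \;=\; t \;\leq\; W(G') \;\leq\; f(|V(G')|) \;=\; f(|V(G)|+def(G)),
\]
which is the desired bound.

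The proof is essentially bookkeeping once the pendant-edge viewpoint is used, so there is no deep obstacle; the only care-point is checking that the pendant edges can be colored with exactly the missing colors without introducing new colors beyond $\{1,\dots,t\}$ and without violating properness, which follows immediately from the definition of $def(v,\alpha)$ and the fact that each pendant vertex is new.
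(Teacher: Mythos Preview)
Your proof is correct and follows essentially the same approach as the paper: fix an optimal $W_{def}(G)$-edge-coloring, attach $def(v,\alpha)$ pendant edges at each vertex and color them with the missing colors to obtain an interval $W_{def}(G)$-coloring of a supergraph $G'\in\mathfrak{C}\cap\mathfrak{N}$ with $|V(G')|=|V(G)|+def(G)$, then apply the hypothesis. Your write-up is in fact slightly more explicit than the paper's in verifying properness and that no new colors are introduced.
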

\begin{proof}
Let $G\in \mathfrak{C}$ and $\alpha$ be a proper $W_{def}(G)$-edge-coloring of $G$ such that $def(G,\alpha)=def(G)$.

Define an auxiliary graph $G^{\prime}$ as follows: for each vertex $v\in V(G)$ with $def(v,\alpha)>0$, we attach $def(v,\alpha)$ pendant edges at vertex $v$. Clearly, $G^{\prime}\in\mathfrak{C}$ and $\vert V(G^{\prime})\vert = \vert V(G)\vert +def(G)$. Next, we extend a proper $W_{def}(G)$-edge-coloring $\alpha$ of $G$ to a proper $W_{def}(G)$-edge-coloring $\beta$ of $G^{\prime}$ as follows: for each vertex $v\in V(G)$ with $def(v,\alpha)>0$, we color the attached edges incident to $v$ using distinct colors from $\left[\underline
S\left(v,\alpha \right),\overline S\left(v,\alpha
\right)\right]\setminus S(v,\alpha)$. By the definition of $\beta$
and the construction of $G^{\prime}$, we obtain that $G^{\prime}$ has an interval $W_{def}(G)$-coloring. Since $G^{\prime}\in \mathfrak{C}\cap\mathfrak{N}$, we have

\begin{center}
$W_{def}(G)\leq W(G^{\prime})\leq f(\vert V(G^{\prime})\vert)= f\left(\vert V(G)\vert+def(G)\right)$.
\end{center}
\end{proof}

Since the class of triangle-free graphs is closed under the operation of attaching of pendant edges, by Theorems \ref{mytheorem1.3} and \ref{mytheorem2.1}, we obtain the following results.

\begin{corollary}
\label{mycorollary2.1} If $G$ is a triangle-free graph, then
$W_{def}(G)\leq \vert V(G)\vert +def(G)-1$.
\end{corollary}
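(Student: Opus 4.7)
The plan is to obtain this corollary as a direct instantiation of Theorem \ref{mytheorem2.1}, taking $\mathfrak{C}$ to be the class of triangle-free graphs and using the bound from Theorem \ref{mytheorem1.3} as the function $f$.

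First I would verify the two hypotheses of Theorem \ref{mytheorem2.1}. The closure of triangle-free graphs under attaching pendant edges is immediate: a pendant edge introduces a new vertex of degree one, which cannot participate in any triangle, so no triangle is created by the operation. Next, Theorem \ref{mytheorem1.3} tells us that for every $G' \in \mathfrak{C} \cap \mathfrak{N}$, the inequality $W(G') \leq |V(G')| - 1$ holds, so the hypothesis of Theorem \ref{mytheorem2.1} is satisfied with $f(n) = n - 1$.

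Applying Theorem \ref{mytheorem2.1} then yields
\begin{center}
$W_{def}(G) \leq f(|V(G)| + def(G)) = |V(G)| + def(G) - 1$,
\end{center}
which is the desired bound.

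There is essentially no obstacle: the corollary is a one-line specialization of Theorem \ref{mytheorem2.1}. The only subtlety worth mentioning is confirming the closure property, and that is trivial. This is why the excerpt remarks immediately before the statement that the result follows at once from Theorems \ref{mytheorem1.3} and \ref{mytheorem2.1}.
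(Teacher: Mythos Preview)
Your proposal is correct and follows exactly the paper's approach: the paper simply remarks that the class of triangle-free graphs is closed under attaching pendant edges and then invokes Theorems \ref{mytheorem1.3} and \ref{mytheorem2.1}, which is precisely the specialization you carry out with $f(n)=n-1$.
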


\begin{corollary}
\label{mycorollary2.2} If $G$ is a bipartite graph, then
$W_{def}(G)\leq \vert V(G)\vert +def(G)-1$.
\end{corollary}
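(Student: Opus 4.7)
The plan is to apply Corollary \ref{mycorollary2.1} directly. Since every bipartite graph has no odd cycles, and in particular no triangle, every bipartite graph is triangle-free. Hence any bipartite graph $G$ satisfies the hypothesis of Corollary \ref{mycorollary2.1}, which immediately yields $W_{def}(G)\leq \vert V(G)\vert + def(G) - 1$.

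Alternatively, one can bypass Corollary \ref{mycorollary2.1} and invoke Theorem \ref{mytheorem2.1} directly with $\mathfrak{C}$ taken to be the class of bipartite graphs. First one checks that this class is closed under the attachment of pendant edges: if $(X,Y)$ is a bipartition of $G$ and we attach a leaf at a vertex $v\in X$, then placing the new leaf into $Y$ produces a valid bipartition of the enlarged graph. The required upper bound on $W(G')$ for $G'\in \mathfrak{C}\cap \mathfrak{N}$ comes from Theorem \ref{mytheorem1.3}, which gives $W(G')\leq \vert V(G')\vert - 1$ since every bipartite graph is triangle-free. Taking $f(n)=n-1$ in Theorem \ref{mytheorem2.1} then yields the stated inequality.

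There is essentially no obstacle in this corollary; it is a one-line consequence of the set inclusion $\{\text{bipartite graphs}\}\subseteq\{\text{triangle-free graphs}\}$, together with the fact that both classes are closed under pendant-edge attachment. The only point that would require genuine care in analogous statements is verifying that the class of interest is closed under attaching pendants before invoking Theorem \ref{mytheorem2.1}; here that verification is immediate.
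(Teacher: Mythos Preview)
Your proposal is correct and matches the paper's approach: the paper obtains both Corollary~\ref{mycorollary2.1} and Corollary~\ref{mycorollary2.2} in one stroke from Theorems~\ref{mytheorem1.3} and~\ref{mytheorem2.1}, noting that triangle-free graphs are closed under attaching pendant edges, so Corollary~\ref{mycorollary2.2} is indeed just the bipartite special case of Corollary~\ref{mycorollary2.1}.
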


It is known that $W_{def}(K_{m,n})=m+n-1$ ($m,n\in \mathbb{N}$)
\cite{Kampreprint,KamDiss}, so this upper bound is sharp for graphs $G$ with $def(G)=0$. Now let us consider graphs $G$ with $def(G)=1$.
Let $K_{n,n}^{\prime}$ be a complete bipartite graph with exactly
one subdivided edge, i.e.
\begin{center}
$V\left(K_{n,n}^{\prime}\right)=\{u_{1},\ldots,u_{n},v_{1},\ldots,v_{n},w\}$
and\\
$E\left(K_{n,n}^{\prime}\right)=\left(\{u_{i}v_{j}\colon\,1\leq
i\leq n,1\leq j\leq n\}\setminus \{u_{n}v_{n}\}\right)\cup
\{u_{n}w,wv_{n}\}$.
\end{center}

It is well-known that
$\chi^{\prime}\left(K_{n,n}^{\prime}\right)=n+1$
\cite{BeinekeWilson,Vizing}, so by Theorem \ref{mytheorem1.1},
$K_{n,n}^{\prime}\notin \mathfrak{N}$. Thus,
$def\left(K_{n,n}^{\prime}\right)\geq 1$. On the other hand, let us define an edge-coloring $\alpha$ of $K_{n,n}^{\prime}$ as follows: for every $e\in E\left(K_{n,n}^{\prime}\right)$, let

\begin{center}
$\alpha\left(e\right)=\left\{
\begin{tabular}{ll}
$i+j-1$, & if $e=u_{i}v_{j}$,\\
$2n-1$, & if $e=u_{n}w$,\\
$2n$, & if $e=wv_{n}.$\\
\end{tabular}%
\right.$
\end{center}

It is not difficult to see that $\alpha$ is a proper
$2n$-edge-coloring of $K_{n,n}^{\prime}$ with deficiency
$def(K_{n,n}^{\prime},\alpha)= 1$. Hence, $def(K_{n,n}^{\prime})=1$
and $2n\leq W_{def}(K_{n,n}^{\prime})\leq 2n+1$.\\

Since the class of planar graphs is closed under the operation of attaching of pendant edges, by Theorems \ref{mytheorem1.4} and \ref{mytheorem2.1}, we obtain the following result.

\begin{corollary}
\label{mycorollary2.3} If $G$ is a planar graph, then
$W_{def}(G)\leq \frac{11}{6}\left(\vert V(G)\vert +def(G)\right)$.
\end{corollary}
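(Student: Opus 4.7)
The plan is to derive Corollary \ref{mycorollary2.3} as an immediate application of the general transfer principle in Theorem \ref{mytheorem2.1}, using Theorem \ref{mytheorem1.4} to supply the function $f$. So the work reduces almost entirely to verifying the hypotheses of Theorem \ref{mytheorem2.1} for the class $\mathfrak{C}$ of planar graphs.

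First I would check that $\mathfrak{C}=\{\text{planar graphs}\}$ is closed under the operation of attaching pendant edges. Given a planar embedding of $G$ and a vertex $v\in V(G)$, pick any face $F$ incident to $v$; then a new vertex can be placed in the interior of $F$ and joined to $v$ by an arc lying inside $F$, producing a planar embedding of the enlarged graph. Iterating this, the graph $G'$ obtained from $G$ by attaching any number of pendant edges is planar.

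Next I would set $f(n)=\tfrac{11}{6}n$. Theorem \ref{mytheorem1.4} asserts exactly that $W(G')\leq f(|V(G')|)$ for every $G'\in \mathfrak{C}\cap\mathfrak{N}$, so the hypothesis of Theorem \ref{mytheorem2.1} is met. Applying Theorem \ref{mytheorem2.1} yields
\[
W_{def}(G)\leq f\bigl(|V(G)|+def(G)\bigr)=\tfrac{11}{6}\bigl(|V(G)|+def(G)\bigr),
\]
which is the desired bound.

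There is no real obstacle here: the only nontrivial ingredient is Theorem \ref{mytheorem1.4}, which is cited as a known result, and the closure property of planarity under pendant-edge attachment is immediate from any planar embedding. The structure of the argument is identical to that used just above for Corollaries \ref{mycorollary2.1} and \ref{mycorollary2.2}, only with the triangle-free bound replaced by the planar bound.
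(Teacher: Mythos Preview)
Your proposal is correct and follows exactly the paper's approach: the paper derives Corollary \ref{mycorollary2.3} by noting that the class of planar graphs is closed under attaching pendant edges and then invoking Theorems \ref{mytheorem1.4} and \ref{mytheorem2.1}. Your explicit verification of the closure property and the identification of $f(n)=\tfrac{11}{6}n$ just spell out what the paper leaves implicit.
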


Next we give some upper bounds for $W_{def}(G)$ depending on degrees and diameter of the connected graph $G$.

\begin{theorem}
\label{mytheorem2.2} If $G$ is a connected graph, then
\begin{center}
$W_{def}(G)\leq 1+def(G)+{\max\limits_{P\in
\mathbf{P}}}{\sum\limits_{v\in V(P)}}\left(d_{G}(v)-1\right)$,
\end{center}
where $\mathbf{P}$ is the set of all shortest paths in the graph
$G$.
\end{theorem}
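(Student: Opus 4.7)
The plan is to adapt the classical Asratian--Kamalian walk argument to a proper edge-coloring with positive deficiency. Fix a proper $W_{def}(G)$-edge-coloring $\alpha$ of $G$ with $def(G,\alpha)=def(G)$, and write $t=W_{def}(G)$. Since $\alpha$ uses every color in $[1,t]$, I first choose a vertex $v_{0}$ with $\underline{S}(v_{0},\alpha)=1$ and a vertex $v_{k}$ with $\overline{S}(v_{k},\alpha)=t$; the connectedness of $G$ then lets me fix a shortest $v_{0}$--$v_{k}$ path $P=v_{0}v_{1}\cdots v_{k}\in\mathbf{P}$.

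The engine of the proof is the per-vertex identity $\overline{S}(v,\alpha) = \underline{S}(v,\alpha) + d_{G}(v) + def(v,\alpha) - 1$, which is a direct rewriting of the definition of $def(v,\alpha)$ combined with $|S(v,\alpha)|=d_{G}(v)$. Together with the trivial observation that an edge $xy$ colored $c$ satisfies $\underline{S}(y,\alpha)\le c\le \overline{S}(x,\alpha)$, this yields the key propagation inequality
$$\underline{S}(y,\alpha) - \underline{S}(x,\alpha) \le d_{G}(x) + def(x,\alpha) - 1\quad\text{for every edge } xy\in E(G).$$

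Iterating this inequality along the edges of $P$ produces $\underline{S}(v_{k},\alpha)\le 1+\sum_{j=0}^{k-1}\bigl(d_{G}(v_{j})+def(v_{j},\alpha)-1\bigr)$; adding $d_{G}(v_{k})+def(v_{k},\alpha)-1$ to both sides and using $\overline{S}(v_{k},\alpha)=t$ yields
$$t \;\le\; 1 + \sum_{j=0}^{k}\bigl(d_{G}(v_{j})-1\bigr) + \sum_{j=0}^{k}def(v_{j},\alpha).$$
Finally, I bound $\sum_{j=0}^{k}def(v_{j},\alpha) \le \sum_{v\in V(G)} def(v,\alpha) = def(G)$ and $\sum_{j=0}^{k}(d_{G}(v_{j})-1)\le \max_{P\in\mathbf{P}}\sum_{v\in V(P)}(d_{G}(v)-1)$, which combine to give the claimed inequality.

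I do not anticipate any substantial obstacle: the argument is essentially a deficiency-tracking variant of the standard Asratian--Kamalian path bound, with $\overline{S}(v,\alpha)-\underline{S}(v,\alpha)=d_{G}(v)+def(v,\alpha)-1$ replacing the interval-coloring identity $\overline{S}(v,\alpha)-\underline{S}(v,\alpha)=d_{G}(v)-1$. The only minor points to verify are that colors $1$ and $t$ are indeed realized on edges (immediate since $\alpha$ is a proper $t$-edge-coloring that uses every color) and that a shortest $v_{0}$--$v_{k}$ path exists (immediate from the connectedness of $G$).
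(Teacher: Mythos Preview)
Your proof is correct and follows essentially the same Asratian--Kamalian walk argument as the paper. The only difference is cosmetic: the paper first attaches $def(v,\alpha)$ pendant edges at each vertex to form an auxiliary graph $H$ carrying a genuine interval coloring, then runs the standard inequalities with $d_{H}(v)=d_{G}(v)+def(v,\alpha)$ and translates back; you bypass this auxiliary construction by using the identity $\overline{S}(v,\alpha)-\underline{S}(v,\alpha)=d_{G}(v)+def(v,\alpha)-1$ directly, which yields the same chain of inequalities in a slightly more streamlined form.
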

\begin{proof} In the proof we follow the idea from \cite{AsrKamJCTB} (Thm. 2).
Let $\alpha$ be a proper $W_{def}(G)$-edge-coloring of $G$ such that $def(G,\alpha)=def(G)$.

Similarly as in the proof of Theorem \ref{mytheorem2.1}, we define an auxiliary graph $H$ as follows: for each vertex $v\in V(G)$ with $def(v,\alpha)>0$, we attach $def(v,\alpha)$ pendant edges at vertex $v$. Clearly, $H$ is a connected graph. Again, we extend a proper $W_{def}(G)$-edge-coloring $\alpha$ of $G$ to a proper
$W_{def}(G)$-edge-coloring $\beta$ of $H$ as follows: for each
vertex $v\in V(G)$ with $def(v,\alpha)>0$, we color the attached
edges incident to $v$ using colors from $\left[\underline
S\left(v,\alpha \right),\overline S\left(v,\alpha
\right)\right]\setminus S(v,\alpha)$. By the definition of $\beta$
and the construction of $H$, we obtain that $H$ has an interval
$W_{def}(G)$-coloring. In the coloring $\beta$ of $H$, we consider
the edges with colors $1$ and $W_{def}(G)$. Let $e=u_{1}u_{2},
e^{\prime}=w_{1}w_{2}$ and $\beta(e)=1,
\beta(e^{\prime})=W_{def}(G)$. Without loss of generality we may
assume that a shortest path $P$ joining $e$ with $e^{\prime}$ joins
$u_{1}$ with $w_{1}$, where $P=v_{0}e_{1}v_{1}\ldots
v_{i-1}e_{i}v_{i}\ldots v_{k-1}e_{k}v_{k}$ and $v_{0}=u_{1}$,
$v_{k}=w_{1}$.

Since $\beta$ is an interval $W_{def}(G)$-coloring of $H$, we have

\begin{center}
$\beta(e_{1})\leq d_{H}(v_{0})$,

$\beta(e_{2})\leq \beta(e_{1})+d_{H}(v_{1})-1$,

$\cdots \cdots \cdots \cdots \cdots \cdots$

$\beta(e_{i})\leq \beta(e_{i-1})+d_{H}(v_{i-1})-1$,

$\cdots \cdots \cdots \cdots \cdots \cdots$

$\beta(e_{k})\leq \beta(e_{k-1})+d_{H}(v_{k-1})-1$,

$\beta(e^{\prime})\leq \beta(e_{k})+d_{H}(v_{k})-1$.

\end{center}

Summing up these inequalities, we obtain

\begin{center}
$\beta(e^{\prime})\leq
1+{\sum\limits_{j=0}^{k}\left(d_{H}(v_{j})-1\right)}$.
\end{center}

From here, we obtain

\begin{eqnarray*}
W_{def}(G)=\beta(e^{\prime}) &\leq&
1+{\sum\limits_{j=0}^{k}\left(d_{H}(v_{j})-1\right)}\leq
1+def(G)+{\sum\limits_{j=0}^{k}\left(d_{G}(v_{j})-1\right)}\\
&\leq& 1+def(G)+{\max\limits_{P\in \mathbf{P}}}{\sum\limits_{v\in
V(P)}}\left(d_{G}(v)-1\right).
\end{eqnarray*}
\end{proof}

\begin{corollary}
\label{mycorollary2.4} If $G$ is a connected graph, then
\begin{center}
$W_{def}(G)\leq
1+def(G)+(\mathrm{diam}(G)+1)\left(\Delta(G)-1\right)$.
\end{center}
\end{corollary}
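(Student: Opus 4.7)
The plan is to derive this corollary as a direct consequence of Theorem~\ref{mytheorem2.2} by uniformly estimating the summation appearing there. Theorem~\ref{mytheorem2.2} gives
\[
W_{def}(G)\leq 1+def(G)+\max_{P\in\mathbf{P}}\sum_{v\in V(P)}\bigl(d_{G}(v)-1\bigr),
\]
so the only task is to upper-bound the quantity $\sum_{v\in V(P)}(d_{G}(v)-1)$ for an arbitrary shortest path $P$ of $G$.

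First, I would use the definition of the diameter: any shortest path $P$ between two vertices of $G$ has length at most $\mathrm{diam}(G)$, hence $|V(P)|\leq \mathrm{diam}(G)+1$. Second, every vertex of $G$ satisfies $d_{G}(v)\leq \Delta(G)$, so $d_{G}(v)-1\leq \Delta(G)-1$ for each $v\in V(P)$. Combining these two observations termwise in the sum yields
\[
\sum_{v\in V(P)}\bigl(d_{G}(v)-1\bigr)\leq |V(P)|\cdot(\Delta(G)-1)\leq (\mathrm{diam}(G)+1)(\Delta(G)-1),
\]
and the same bound holds for the maximum over all shortest paths. Substituting this into the inequality from Theorem~\ref{mytheorem2.2} gives the desired bound
\[
W_{def}(G)\leq 1+def(G)+(\mathrm{diam}(G)+1)(\Delta(G)-1).
\]

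There is no real obstacle here; the corollary is a two-line consequence of the theorem, obtained by replacing a path-dependent sum with the crude uniform estimate based on the maximum degree and the diameter. The only point worth mentioning is that the estimate is valid because $G$ being connected ensures $\mathrm{diam}(G)$ is well-defined and every pair of vertices is joined by some shortest path, so $\mathbf{P}$ is nonempty and the maximum on the right-hand side of Theorem~\ref{mytheorem2.2} makes sense.
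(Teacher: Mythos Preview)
Your proposal is correct and matches the paper's approach: the corollary is stated immediately after Theorem~\ref{mytheorem2.2} with no separate proof, so the intended derivation is exactly the uniform bound $\sum_{v\in V(P)}(d_G(v)-1)\le |V(P)|(\Delta(G)-1)\le(\mathrm{diam}(G)+1)(\Delta(G)-1)$ that you give.
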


It is known that the upper bound in Theorem \ref{mytheorem2.2} is
sharp for trees \cite{Kampreprint,KamDiss}. The upper bound in
Corollary \ref{mycorollary2.4} cannot be significantly improved, since
$def\left(C_{2n+1}\right)=1$, $\mathrm{diam}\left(C_{2n+1}\right)=n$
and $W_{def}\left(C_{2n+1}\right)=n+2$ ($n\in \mathbb{N}$).\\

Here we prove a lower bound on $w_{def}(G)$ for graphs without
perfect matching.

\begin{theorem}
\label{mytheorem2.3} If $G$ has no perfect matching, then
$w_{def}(G)\geq 2\delta(G)-def(G)$.
\end{theorem}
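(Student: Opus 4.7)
Plan: Let $\alpha$ be a proper $t$-edge-coloring of $G$ with $t = w_{def}(G)$ and $def(G,\alpha) = def(G)$, and for each color $c \in [1,t]$ let $M_c := \alpha^{-1}(c)$. Each $M_c$ is a matching, and since $G$ has no perfect matching, each $M_c$ leaves at least one vertex of $G$ unsaturated.

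My strategy is to exhibit a vertex $u$ whose spectrum forces $\overline{S}(u,\alpha) \geq 2\delta(G) - def(G)$, so that $t \geq \overline{S}(u,\alpha) \geq 2\delta(G) - def(G)$. Let $k$ be the largest non-negative integer for which $V(M_1) \cup \cdots \cup V(M_k) \neq V(G)$; since $M_1$ is not a perfect matching, $k \geq 1$. Pick any $u \in V(G) \setminus (V(M_1) \cup \cdots \cup V(M_k))$: then $\underline{S}(u,\alpha) \geq k+1$, and because $|S(u,\alpha)| = d_G(u) \geq \delta(G)$ consists of distinct integers all lying in $[\underline{S}(u,\alpha), \overline{S}(u,\alpha)]$, we obtain $\overline{S}(u,\alpha) \geq \underline{S}(u,\alpha) + d_G(u) - 1 \geq k + \delta(G)$, and hence $t \geq k + \delta(G)$.

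It therefore suffices to prove $k \geq \delta(G) - def(G)$; equivalently, that the first $\delta(G) - def(G)$ matchings do not together saturate $V(G)$. I would argue this by contradiction: suppose instead that $V(M_1) \cup \cdots \cup V(M_{\delta(G) - def(G)}) = V(G)$, so every vertex $v$ satisfies $\underline{S}(v,\alpha) \leq \delta(G) - def(G)$. Using $\overline{S}(v,\alpha) = \underline{S}(v,\alpha) + d_G(v) + def(v,\alpha) - 1 \leq t$ together with the global identity $\sum_{v \in V(G)} (\overline{S}(v,\alpha) - \underline{S}(v,\alpha) + 1) = 2|E(G)| + def(G)$, one hopes to either derive a direct numerical contradiction, or to exhibit a proper $(t-1)$-edge-coloring of $G$ with deficiency $def(G)$, contradicting the minimality of $t = w_{def}(G)$.

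The main obstacle is this last step. A naive averaging argument (comparing $\sum_v \underline{S}(v,\alpha)$ and $\sum_v \overline{S}(v,\alpha)$ and using $2|E(G)| \geq \delta(G)|V(G)|$) seems to fall short by one, yielding only $t \geq 2\delta(G) - def(G) - 1$. Closing this gap should require exploiting the minimality of $t$ more carefully, perhaps via a Kempe-chain re-coloring argument which, using the redundancy provided by the first $\delta(G) - def(G)$ matchings jointly covering $V(G)$ while no single $M_c$ is perfect, consolidates two color classes and thus reduces the number of colors used without increasing the total deficiency.
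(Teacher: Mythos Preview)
Your proposal has a genuine gap that you yourself identify: you reduce the theorem to the claim $k \geq \delta(G) - def(G)$ (equivalently, $\max_v \underline{S}(v,\alpha) \geq \delta(G) - def(G) + 1$), but you cannot prove this, and your speculation about Kempe chains or recoloring to exploit the minimality of $t$ is neither carried out nor clearly viable. The averaging you attempt indeed falls short, and there is no evident recoloring that would reduce the number of colors without raising deficiency.

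The paper's argument avoids this detour entirely. Instead of searching for a single vertex with large $\underline{S}$, it counts over \emph{colors}. The two elementary bounds
\[
\underline{S}(v,\alpha) \leq t - \delta(G) + 1, \qquad \overline{S}(v,\alpha) \geq \delta(G)
\]
hold for \emph{every} vertex $v$ (since $|S(v,\alpha)| \geq \delta(G)$ and $S(v,\alpha) \subseteq [1,t]$). Now for each color $c$ in the range $[t-\delta(G)+1,\,\delta(G)]$, pick any vertex $v_c$ not covered by $M_c$; the two bounds force $\underline{S}(v_c,\alpha) < c < \overline{S}(v_c,\alpha)$, so $c$ is a genuine gap contributing to $def(v_c,\alpha)$. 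Summing over $c$ gives $def(G) \geq 2\delta(G) - t$, which is the theorem. No extremal vertex, no recoloring, no minimality of $t$ beyond $def(G,\alpha)=def(G)$ is needed.

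In short, the missing idea is to look color-by-color at the ``middle'' range where every vertex's spectrum must straddle $c$, rather than trying to push a single $\underline{S}(u,\alpha)$ high.
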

\begin{proof} In the proof of this theorem we follow the idea from \cite{BouchHertzDesau} (Prop. 2).

Let $\alpha$ be a proper $w_{def}(G)$-edge-coloring of $G$ such that
$def(G,\alpha)=def(G)$.

It is easy to see that for every $v\in V(G)$, we have

\begin{center}
$1\leq \underline S\left(v,\alpha \right)\leq
w_{def}(G)-\delta(G)+1$.
\end{center}

Assume that $w_{def}(G)-\delta(G)+1\leq \delta(G)$ (otherwise,
$w_{def}(G)\geq 2\delta(G)\geq 2\delta(G)-def(G)$).

Since $G$ has no perfect matching, for each color $c\in
\left[w_{def}(G)-\delta(G)+1,\delta(G)\right]$, there is at least
one vertex $v_{c}$ such that $c\notin S\left(v_{c},\alpha\right)$
and $\underline S\left(v_{c},\alpha \right)<c<\overline
S\left(v_{c},\alpha \right)$. This implies that there are at least
$2\delta(G)-w_{def}(G)$ missing colors at vertices of $G$. Hence,
$def(G)\geq 2\delta(G)-w_{def}(G)$.
\end{proof}

\begin{corollary}
\label{mycorollary2.5}\cite{BouchHertzDesau} If $G$ is a graph with an odd number of vertices, then $w_{def}(G)\geq 2\delta(G)-def(G)$.
\end{corollary}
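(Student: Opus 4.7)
The plan is very short: this is a one-line corollary of Theorem~\ref{mytheorem2.3}. The only thing to observe is that a perfect matching in a graph $G$ partitions $V(G)$ into pairs, so $|V(G)|$ must be even whenever a perfect matching exists. Contrapositively, if $|V(G)|$ is odd then $G$ has no perfect matching, and the hypothesis of Theorem~\ref{mytheorem2.3} is automatically satisfied.

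Concretely, I would just write: let $G$ be a graph with $|V(G)|$ odd. Since matchings consist of vertex-disjoint edges and a perfect matching would cover all $|V(G)|$ vertices using $|V(G)|/2$ edges, no perfect matching can exist when $|V(G)|$ is odd. Applying Theorem~\ref{mytheorem2.3} to $G$ then yields $w_{def}(G) \geq 2\delta(G) - def(G)$, which is the desired inequality.

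There is no real obstacle here; the corollary is a direct instantiation of the theorem under a stricter hypothesis. The only thing to be a little careful about is that the bound in Theorem~\ref{mytheorem2.3} is stated without any nontriviality assumption on $\delta(G)$ or $def(G)$, so no extra case analysis is needed; the inequality passes through verbatim.
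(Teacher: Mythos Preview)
Your proposal is correct and matches the paper's own treatment exactly: the paper states Corollary~\ref{mycorollary2.5} without proof as an immediate consequence of Theorem~\ref{mytheorem2.3}, relying on the obvious fact that a graph with an odd number of vertices has no perfect matching. There is nothing to add.
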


\begin{corollary}
\label{mycorollary2.6}\cite{BouchHertzDesau} If $G$ is an $r$-regular graph with an odd number of vertices and $def(G)=\frac{r}{2}$, then
$w_{def}(G)\geq \frac{3r}{2}$.
\end{corollary}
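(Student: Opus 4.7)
The plan is to derive this as an immediate consequence of the previous corollary (equivalently, of Theorem~\ref{mytheorem2.3}), since all the hypotheses needed to invoke it are already in hand.

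First I would observe that an $r$-regular graph $G$ on an odd number of vertices cannot have a perfect matching, because any perfect matching requires an even number of vertices. Therefore Theorem~\ref{mytheorem2.3} (or equivalently Corollary~\ref{mycorollary2.5}) applies and yields
\begin{equation*}
w_{def}(G) \geq 2\delta(G) - def(G).
\end{equation*}

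Next I would use the two structural assumptions in turn: since $G$ is $r$-regular, $\delta(G)=r$; and by hypothesis $def(G)=\tfrac{r}{2}$. Substituting both into the above inequality gives
\begin{equation*}
w_{def}(G) \geq 2r - \tfrac{r}{2} = \tfrac{3r}{2},
\end{equation*}
which is the desired bound.

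There is essentially no obstacle here: the statement is a one-line specialization, and the only small observation required is that odd order rules out a perfect matching so that Theorem~\ref{mytheorem2.3} is available.
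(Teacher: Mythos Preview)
Your proof is correct and matches the paper's approach exactly: the corollary is stated in the paper without an explicit proof, immediately after Theorem~\ref{mytheorem2.3} and Corollary~\ref{mycorollary2.5}, as a direct specialization of them. The only observation needed is that an odd number of vertices precludes a perfect matching, and then substituting $\delta(G)=r$ and $def(G)=\tfrac{r}{2}$ into the bound $w_{def}(G)\geq 2\delta(G)-def(G)$ gives the result.
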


\begin{corollary}
\label{mycorollary2.7} If $G$ has no perfect matching and $G\in
\mathfrak{N}$, then $w(G)\geq \max\{\Delta(G),2\delta(G)\}$.
\end{corollary}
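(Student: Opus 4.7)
The plan is to derive the corollary as a direct combination of Theorem~\ref{mytheorem2.3} and Theorem~\ref{mytheorem1.1}, using the basic observation that $def(G)=0$ and $w(G)=w_{def}(G)$ whenever $G\in\mathfrak{N}$ (as noted in Section~2).

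First I would handle the $2\delta(G)$ lower bound. Since $G\in\mathfrak{N}$, we have $def(G)=0$, and by the remark after the definition of $w_{def}(G)$, this forces $w_{def}(G)=w(G)$. Applying Theorem~\ref{mytheorem2.3} with $def(G)=0$ immediately yields
\begin{equation*}
w(G)=w_{def}(G)\geq 2\delta(G)-def(G)=2\delta(G).
\end{equation*}

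Next I would obtain the $\Delta(G)$ lower bound. By Theorem~\ref{mytheorem1.1}, $G\in\mathfrak{N}$ implies $\chi'(G)=\Delta(G)$. Any interval $t$-coloring is in particular a proper $t$-edge-coloring, so $t\geq \chi'(G)$; taking the minimum over all $t$ for which $G$ admits an interval $t$-coloring gives $w(G)\geq \chi'(G)=\Delta(G)$. Combining the two bounds yields $w(G)\geq \max\{\Delta(G),2\delta(G)\}$, as required.

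There is no real obstacle here — everything reduces to invoking the two cited results and the identity $w(G)=w_{def}(G)$ for interval colorable graphs. The only thing worth double-checking is that the hypothesis ``$G$ has no perfect matching'' of Theorem~\ref{mytheorem2.3} is exactly what we are assuming, so no extra argument is needed before applying it.
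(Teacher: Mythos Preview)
Your argument is correct and matches the paper's intended derivation: the corollary is stated immediately after Theorem~\ref{mytheorem2.3} with no separate proof, precisely because it follows by specializing that theorem to $def(G)=0$ and combining with the trivial bound $w(G)\geq\Delta(G)$. One small simplification: you do not actually need Theorem~\ref{mytheorem1.1} for the $\Delta(G)$ part, since any proper edge-coloring (interval or not) already uses at least $\Delta(G)$ colors, so $w(G)\geq\chi'(G)\geq\Delta(G)$ holds for every graph.
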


In \cite{GiaroKubaleMalaf2}, Giaro, Kubale and Ma\l afiejski
determined the deficiency of $K_{2n+1}$, and they proved that
$def(K_{2n+1})=n$ ($n\in \mathbb{N}$). The lower bound in Theorem
\ref{mytheorem2.3} is sharp for $K_{2n+1}$, since
$\delta(K_{2n+1})=2n$ and it was proved in \cite{BouchHertzDesau}
that $w_{def}\left(K_{2n+1}\right)=3n$ ($n\in \mathbb{N}$). In the
next section we will prove that this lower bound is also sharp for
near-complete graphs, but first we consider the difference $W_{def}(G)-w_{def}(G)$. In \cite{KamDiss}, Kamalian proved that for any $l\in \mathbb{N}$, there exists a graph $G$ such that $G\in \mathfrak{N}$ and $W(G)-w(G)\geq l$. Here we extend this result to graphs with a positive deficiency.

\begin{theorem}
\label{mytheorem2.4} For any $l\in \mathbb{N}$, there exists a graph $G$ such that $def(G)>0$ and $W_{def}(G)-w_{def}(G)\geq l$.
\end{theorem}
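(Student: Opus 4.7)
The plan is to construct $G$ as the disjoint union $K_{3}\cup K_{p,p}$ (with disjoint vertex sets) for a sufficiently large $p$ depending on $l$; the component $K_{3}$ supplies the positive deficiency, while $K_{p,p}$ supplies a wide range of admissible color numbers, since $K_{p,p}$ admits an interval $t$-coloring for every $t\in[p,2p-1]$.

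The first step is to show $def(G)=1$. Deficiency is additive over disjoint unions: for any proper edge-coloring $\alpha$ of $G$, the spectra at vertices of $K_{3}$ depend only on the edges of $K_{3}$, and relabelling the colors used on $K_{3}$ as $1,\ldots,s$ in increasing order cannot increase any spectrum's deficiency, so $\sum_{v\in V(K_{3})}def(v,\alpha)\geq def(K_{3})$; the same argument applies to $K_{p,p}$. The reverse inequality comes from pasting optimal colorings of each component together after a suitable color shift. Since $\chi^{\prime}(K_{3})=3>2=\Delta(K_{3})$, Theorem \ref{mytheorem1.1} gives $K_{3}\notin\mathfrak{N}$ and hence $def(K_{3})\geq 1$, with equality realized by any proper $3$-edge-coloring; together with $def(K_{p,p})=0$ this yields $def(G)=1>0$.

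Next I would exhibit a proper $p$-edge-coloring of $G$ of total deficiency $1$ by taking an interval $p$-coloring of $K_{p,p}$ on the colors $[1,p]$ (which exists since $w(K_{p,p})=p$) together with a proper $3$-edge-coloring of $K_{3}$ on $\{1,2,3\}$: for $p\geq 3$ this uses every color of $[1,p]$ and has total deficiency $0+1=1$, so $w_{def}(G)\leq p$. Symmetrically, combining an interval $(2p-1)$-coloring of $K_{p,p}$ on $[1,2p-1]$ (which exists since $W(K_{p,p})=2p-1$) with a coloring of $K_{3}$ using the three new consecutive colors $\{2p,2p+1,2p+2\}$ gives a proper $(2p+2)$-edge-coloring of $G$ whose color set is exactly $[1,2p+2]$ and whose deficiency equals $0+1=1$, so $W_{def}(G)\geq 2p+2$. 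Subtracting yields $W_{def}(G)-w_{def}(G)\geq p+2$, and choosing $p=\max(3,l-2)$ forces this to be at least $l$. The only subtle point in the argument is the additivity of $def$ over disjoint unions in the first step; everything else reduces to the known interval-colorability range of $K_{p,p}$.
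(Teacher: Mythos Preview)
Your proof is correct, and it takes a genuinely different route from the paper's argument. The paper proves the theorem for the \emph{connected} graphs $G=K_{2p+1}$ with $p=2l+1$: since $def(K_{2p+1})=p$ and $w_{def}(K_{2p+1})=3p$ are already known, the bulk of the work is a fairly intricate construction of a proper $(3p+l)$-edge-coloring of $K_{2p+1}$ with deficiency exactly $p$, obtained by composing an interval coloring of $K_{p+1}$, the special coloring of $K_{p,p}$ from Lemma~\ref{Kpp}, and a minimum-deficiency coloring of a smaller odd complete graph. In fact, the paper proves the more general bound $W_{def}(K_{2n+1})\geq 3n+\tfrac{p-1}{2}$ whenever $n=p\,2^{q}$ with $p$ odd, and this is reused in Corollary~\ref{mycorollary2.8}.

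Your construction $G=K_{3}\cup K_{p,p}$ is far more elementary: it needs only the standard interval-colorability range $[p,2p-1]$ of $K_{p,p}$ and the trivial fact $def(K_{3})=1$. The additivity of deficiency over components, which you flag as the only subtle point, is handled correctly by your monotone relabelling argument. What your approach buys is brevity; what it gives up is that your witness graph is disconnected, and you obtain no information about $W_{def}$ for complete graphs. Since the theorem as stated imposes no connectedness hypothesis, your argument suffices for the bare statement, but the paper's proof is doing strictly more and feeds into later results.
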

\begin{proof}
Let $p=2l+1$ and $G=K_{2p+1}$. Since $def(G)=p>0$ and $w_{def}(G)=3p$, so to complete the proof it is sufficient to show that $W_{def}(G) \geq 3p + \frac{p-1}{2} = 3p + l$. We will prove a more general statement for all complete graphs with odd number of vertices. We will show that if $n = p2^q$, where $p$ is odd and $q \in \mathbb{Z}_+$, then
\begin{center}
$W_{def}(K_{2n+1}) \geq 3n + \frac{p-1}{2}$.
\end{center}

\begin{figure}[t!]
\centering
\includegraphics[width=0.7\textwidth]{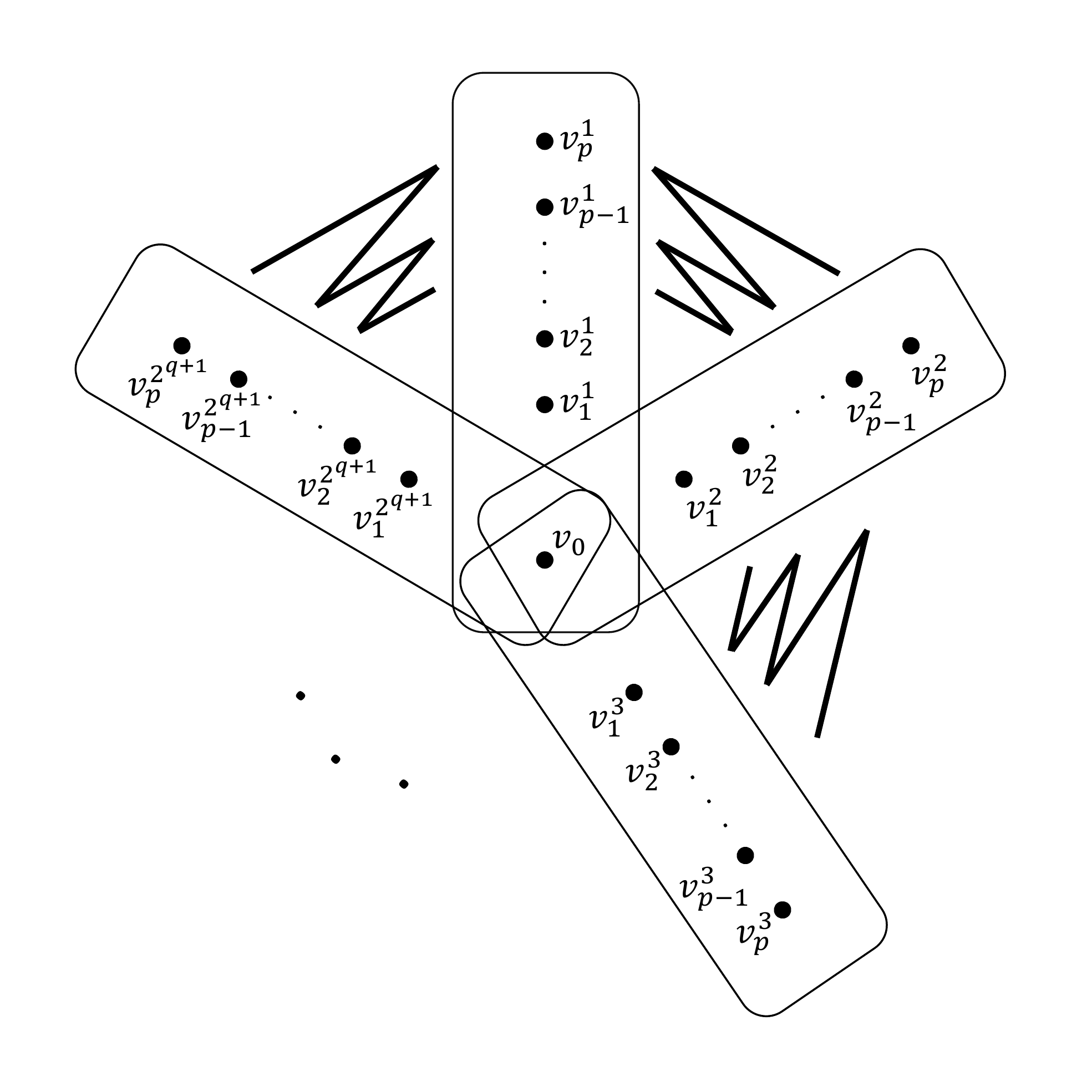}
\caption{We divide the vertices of $K_{p2^{q+1}+1}$ into $2^{q+1}$ groups, such that the intersection of each pair of groups is the vertex $v_0$.}
\label{complete-graph}
\end{figure}

We construct an edge-coloring $\phi$ of $K_{p2^{q+1}+1}$ by composing three colorings of different graphs.

\begin{enumerate}
    \item We need interval $\left(\frac{3}{2}(p+1)-2 \right)$-coloring $\alpha$ of $K_{p+1}$ as described in the proof of Theorem 4 from \cite{PetDM}. We denote the vertices of $K_{p+1}$ in the following way: $V(K_{p+1}) = \{u_i : i=0, \ldots, p\}$. The coloring $\alpha$ has an important property that $\underline{S}(u_i, \alpha) = \lfloor \frac{i}{2} \rfloor + 1$ for every $i = 0,\ldots,p$.
    \item Next, we need the interval coloring $\beta$ of $K_{p,p}$ described in Lemma \ref{Kpp}. We denote the vertices of $K_{p,p}$ in the following way: $V(K_{p,p}) = \{x_i,y_i:i=1,\ldots,p\}$. Lemma \ref{Kpp} guarantees that for every $i=1,\ldots,p$, $\underline{S}(x_i,\beta) = \underline{S}(y_i,\beta) = \left\lfloor \frac{i}{2} \right\rfloor + 1$.
    \item Finally, we need a proper edge-coloring $\gamma$ of $K_{2^{q+1}+1}$ with $3\cdot2^q$ colors. We denote its vertices by $V(K_{2^{q+1}+1}) = \left\{w^j : j=0,\ldots, 2^{q+1} \right\}$. The coloring $\gamma$ must satisfy the following condition: $def(K_{2^{q+1}+1},\gamma) = def(w^0, \gamma) = 2^q$. Colorings with these properties are described in the proof of Theorem 4.2 in \cite{GiaroKubaleMalaf2} and in the proof of Theorem 29 in \cite{PetMkhitaryan}.
\end{enumerate}

We denote the vertices of $K_{p2^{q+1}+1}$ in the following way (Fig. \ref{complete-graph}):
\begin{align*}
V\left(K_{p2^{q+1}+1}\right) = \{v_0\} \cup \left\{v_i^j : i=1,\ldots,p,\ j=1,\ldots,2^{q+1} \right\}.
\end{align*}

We construct its edge-coloring $\phi$ in the following way:
\begin{align*}
\phi\left(v_{i_1}^j v_{i_2}^j\right) &= \alpha(u_{i_1}u_{i_2}) + p\left(\gamma(w^0w^j) - 1\right), & 0 \leq i_1 < i_2 \leq p, & & 1 \leq j \leq 2^{q+1}; \\
\phi\left(v_{i_1}^{j_1}v_{i_2}^{j_2}\right) &= \beta(x_{i_1}y_{i_2}) + p\left(\gamma(w^{j_1}w^{j_2}) - 1\right), & 1 \leq i_1, i_2 \leq p, & & 1 \leq j_1 < j_2 \leq 2^{q+1}.
\end{align*}

The symbols $v_0^j$ in the above formulas refer to the same vertex $v_0$, for all $j=1,\ldots,2^{q+1}$. The first formula colors the edges that have both ends in the same group in Fig. \ref{complete-graph}, while the second formula colors the edges that join vertices from different groups. We show that the spectrums of the vertices $v_i^j$, $i=1,\ldots,p$, $j=1,\ldots,2^{q+1}$ are intervals.
\begin{align*}
    S\left(v^j_i,\phi\right) &= \left[\underline{S}(u_i, \alpha) + p\left(\gamma(w^0w^j) - 1\right), \overline{S}(u_i, \alpha) + p\left(\gamma(w^0w^j) - 1\right)\right] \\
    &\cup \bigcup\limits_{j'\in [1, 2^{q+1}] \setminus \{j\}}{\left[\underline{S}(x_i, \beta) + p\left(\gamma(w^jw^{j'}) - 1\right), \overline{S}(x_i, \beta) + p\left(\gamma(w^jw^{j'}) - 1\right) \right]}.
\end{align*}
We have that $\underline{S}(u_i, \alpha) = \underline{S}(x_i, \beta) = \lfloor \frac{i}{2} \rfloor + 1$ for every $i = 1,\ldots,p$. So the above expression becomes:
\begin{align*}
    S\left(v^j_i,\phi\right) &= \left[\left\lfloor \frac{i}{2} \right\rfloor + 1 + p\left(\gamma(w^0w^j) - 1\right), \left\lfloor \frac{i}{2} \right\rfloor + p + p\left(\gamma(w^0w^j) - 1\right)\right] \\
    &\cup \bigcup\limits_{j'\in [1, 2^{q+1}] \setminus \{j\}}{\left[\left\lfloor \frac{i}{2} \right\rfloor + 1 + p\left(\gamma(w^jw^{j'}) - 1\right), \left\lfloor \frac{i}{2} \right\rfloor + p + p\left(\gamma(w^jw^{j'}) - 1\right) \right]}\\
    &= \bigcup\limits_{j'\in [0, 2^{q+1}] \setminus \{j\}}{\left[\left\lfloor \frac{i}{2} \right\rfloor + 1 + p\left(\gamma(w^jw^{j'}) - 1\right), \left\lfloor \frac{i}{2} \right\rfloor + p + p\left(\gamma(w^jw^{j'}) - 1\right) \right]}\\
    &= \left[\left\lfloor \frac{i}{2} \right\rfloor + 1 + p\left(\underline{S}(w^j,\gamma) - 1\right), \left\lfloor \frac{i}{2} \right\rfloor + p + p\left(\overline{S}(w^j,\gamma) - 1\right) \right].
\end{align*}
The last equation holds because the spectrum $S\left(w^j, \gamma\right)$ is an interval for $j=1,\ldots,2^{q+1}$. Finally, the spectrum of $v_0$ and the deficiency of $\phi$ at $v_0$ will be:
\begin{align*}
    S\left(v_0,\phi\right) &= \bigcup\limits_{j'\in [1, 2^{q+1}]} {\left[ \underline{S}(u_0, \alpha) + p\left(\gamma(w^0w^{j'}) - 1\right), \overline{S}(u_0, \alpha) + p\left(\gamma(w^0w^{j'}) - 1\right) \right]}\\
    &= \bigcup\limits_{j'\in [1, 2^{q+1}]} {\left[ 1 + p\cdot\gamma(w^0w^{j'}) - p, p\cdot\gamma(w^0w^{j'}) \right]},\\
    def(v_0, \phi) &= \overline{S}(v_0, \phi) - \underline{S}(v_0, \phi)-d_{K_{2n+1}}(v_0) + 1\\
    &= p\cdot\overline{S}(w^0,\gamma) - (1 + p\cdot\underline{S}(w^0,\gamma) - p)-p\cdot2^{q+1} + 1\\
    &= p\left(\overline{S}(w^0,\gamma) - \underline{S}(w^0,\gamma)-d_{K_{2^{q+1}+1}}(w^0) + 1\right) = p\cdot def(w^0,\gamma) = p2^q = n.
\end{align*}

Let $w^{\underline{j}}$ and $w^{\overline{j}}$ be the vertices of $K_{2^{q+1}+1}$ such that $\underline{S}\left(w^{\underline{j}}, \gamma\right) = 1$ and $\overline{S}\left(w^{\overline{j}}, \gamma\right) = 3\cdot2^q$. It is easy to see that $\underline{S}\left(v^{\underline{j}}_{1},\phi\right) = 1$ and $\overline{S}\left(v^{\overline{j}}_{p},\phi\right) = \left\lfloor \frac{p}{2} \right\rfloor + p + p\left(3\cdot2^q - 1\right) = 3n + \frac{p-1}{2}$. So, $\phi$ is a proper edge-coloring of $K_{2n+1}$ with $3n + \frac{p-1}{2}$ colors having $def(K_{2n+1}, \phi) = def\left(v_0, \phi\right) = n$.
\end{proof}

The last theorem of this section generalizes Theorem \ref{mytheorem1.2}.

\begin{theorem}
\label{mytheorem2.5}
Let $\alpha_0$ be a proper $t_0$-edge-coloring of a regular graph $G$ and $D \subseteq V(G)$ be a subset of its vertices. If for every $v \in V(G) \setminus D$, $def(v,\alpha_0)=0$, then for every $t$, $\overline{S}(D,\alpha_0) - \underline{S}(D,\alpha_0) + 1 \leq t \leq t_0$, $G$ has a proper $t$-edge-coloring $\alpha$ such that $def(v,\alpha) = def(v,\alpha_0)$ for every vertex $v\in V(G)$.
\end{theorem}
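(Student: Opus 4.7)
The plan is to induct on $t_0 - t$, reducing the number of colors by one at each step while preserving every vertex's deficiency. The base case $t = t_0$ is immediate (take $\alpha = \alpha_0$). For the inductive step, suppose $t < t_0$; since the theorem's hypothesis gives $t \geq U - L + 1$ where $L = \underline{S}(D, \alpha_0)$ and $U = \overline{S}(D, \alpha_0)$, we have $t_0 > U - L + 1$. I will construct a proper $(t_0 - 1)$-edge-coloring $\alpha_0'$ of $G$ with $def(v, \alpha_0') = def(v, \alpha_0)$ for every $v$, satisfying also $\overline{S}(D, \alpha_0') - \underline{S}(D, \alpha_0') = U - L$; applying the hypothesis to $\alpha_0'$ with target $t \leq t_0 - 1$ then yields the desired $\alpha$.

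Write $r = \Delta(G)$ for the common degree. Because $[L, U] \subsetneq [1, t_0]$, at least one of $U < t_0$ or $L > 1$ must hold. Suppose first that $U < t_0$. Then color $t_0$, which must be used since $\alpha_0$ is a proper $t_0$-edge-coloring, never appears on an edge incident to $D$, so every color-$t_0$ edge joins two vertices of $V(G) \setminus D$, each with spectrum $[t_0 - r + 1, t_0]$ in $\alpha_0$. I define $\alpha_0'$ by recoloring every color-$t_0$ edge with color $t_0 - r$ and leaving all other edges untouched. The new spectrum at each affected endpoint becomes $[t_0 - r, t_0 - 1]$, still an interval of length $r$; properness holds because $t_0 - r$ was absent from each old endpoint's spectrum; and every color in $[1, t_0 - 1]$ remains used.

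Suppose instead that $L > 1$. I define $\alpha_0'(e) = r$ when $\alpha_0(e) = 1$ and $\alpha_0'(e) = \alpha_0(e) - 1$ otherwise. For any vertex $v \in V(G) \setminus D$ incident to a color-$1$ edge of $\alpha_0$ the spectrum is $[1, r]$; its color-$1$ edge becomes color $r$ while its other edges (colored $2, \ldots, r$) shift to $1, \ldots, r - 1$, leaving $S(v, \alpha_0') = [1, r]$ intact. Every other vertex of $V(G) \setminus D$ has its spectrum shifted down by one and remains an interval, and every vertex of $D$, having no color-$1$ edge (by $L > 1$), also has its spectrum shifted down by one; hence gap patterns, and therefore deficiencies, are preserved, and $\alpha_0'$ uses exactly the colors $1, \ldots, t_0 - 1$.

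The main thing to verify carefully is properness in the second case. Two adjacent edges of $\alpha_0$ both colored $\geq 2$ retain their color difference after the shift, and two color-$1$ edges cannot share a vertex. The remaining concern is an adjacent pair colored $1$ and $r + 1$, which would both become $r$; but their common endpoint $v$ is incident to a color-$1$ edge, forcing $v \in V(G) \setminus D$ (by $L > 1$) and hence $S(v, \alpha_0) = [1, r]$, which rules out an incident color-$(r + 1)$ edge. With this verified, iterating the two cases drives $t_0$ down to any prescribed value in $[U - L + 1, t_0]$, completing the induction.
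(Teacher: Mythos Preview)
Your proof is correct and rests on the same idea as the paper's: because every vertex outside $D$ has an interval spectrum of length exactly $r=\Delta(G)$, an edge carrying the current extreme color can be shifted by $\pm r$ without destroying properness or changing any deficiency. The only difference is structural: you peel off one color at a time by induction (handling either the top color $t_0$ or the bottom color $1$ at each step), whereas the paper performs the shift in bulk, recoloring every edge with color in $[t+1,t_0]$ (and, if necessary, every edge with color in $[1,t_0-t-b]$) by subtracting or adding $r$ in a single pass, then translating globally. Your inductive framing makes the properness check slightly cleaner (you only ever have two colors interacting), while the paper's direct construction avoids the induction scaffolding; neither gains anything substantive over the other.
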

\begin{proof}
Let $a = \underline{S}(D,\alpha_0) - 1$ and $b = t_{0} - \overline{S}(D,\alpha_0)$. Basically we have $a+b$ colors outside the range $[\underline{S}(D,\alpha_0),\overline{S}(D,\alpha_0)]$ and we need to get rid of $t_0 - t$ of them. We construct the edge-coloring $\alpha$ of $G$ by copying colors of all the edges from $\alpha_0$ and then modifying some of them. First we try to remove the colors larger than $\overline{S}(D,\alpha_0)$, and if these are not enough, we remove the colors smaller than $\underline{S}(D,\alpha_0)$.

If $t_0 - t \leq b$, then for every $e\in E(G)$ with $\alpha_0(e)\in \left[t + 1, t_0\right]$, we set $\alpha(e) = \alpha_0(e) - \Delta(G)$.

If $t_0 - t > b$, then for every $e\in E(G)$ with $\alpha_0(e)\in \left[\overline{S}(D, \alpha_0) + 1, t_0\right]$, we set $\alpha(e) = \alpha_0(e) - \Delta(G)$. Next, for every $e\in E(G)$ with $\alpha_0(e)\in \left[1, t_0 - t - b \right]$, we set $\alpha(e) = \alpha_0(e) + \Delta(G)$.
In both cases, the spectrums of the vertices from $D$ are not modified and the spectrums of the other vertices remain intervals. If $\underline{S}(V(G), \alpha) > 1$, then we subtract $\underline{S}(V(G), \alpha) - 1$ from colors of all the edges to obtain the final coloring.
\end{proof}

Note that if we apply the above theorem on an interval colorable regular graph $G$ by taking its interval $W(G)$-coloring as $\alpha_0$ and any set containing a single vertex as $D$, we obtain Theorem \ref{mytheorem1.2}.

Theorems \ref{mytheorem2.4} and \ref{mytheorem2.5} imply the following result.

\begin{corollary}
\label{mycorollary2.8}
Let $n=p2^q$, where $p$ is odd and $q \in \mathbb{Z}_+$. For every $t$, $3n \leq t \leq 3n + \frac{p-1}{2}$, $K_{2n+1}$ has a proper $t$-edge-coloring $\alpha$ such that $def(K_{2n+1},\alpha)=n$.
\end{corollary}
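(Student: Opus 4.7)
The plan is to combine Theorems \ref{mytheorem2.4} and \ref{mytheorem2.5} in a direct way. By Theorem \ref{mytheorem2.4}, the edge-coloring $\phi$ constructed there is a proper $\left(3n + \frac{p-1}{2}\right)$-edge-coloring of $K_{2n+1}$ with $def(K_{2n+1}, \phi) = def(v_0, \phi) = n$. A key observation that needs to be read off from that proof is that every vertex of $K_{2n+1}$ other than $v_0$ has an interval spectrum (this was computed explicitly for the vertices $v_i^j$). Thus $\phi$ fits the hypothesis of Theorem \ref{mytheorem2.5} applied to the $2n$-regular graph $K_{2n+1}$ with $D=\{v_0\}$ and $t_0 = 3n + \frac{p-1}{2}$.

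To apply Theorem \ref{mytheorem2.5} over the entire range $3n \leq t \leq 3n + \frac{p-1}{2}$, I need to verify that
\begin{align*}
\overline{S}(v_0, \phi) - \underline{S}(v_0, \phi) + 1 \leq 3n.
\end{align*}
From the formula for $S(v_0,\phi)$ derived inside the proof of Theorem \ref{mytheorem2.4}, this quantity equals $p\bigl(\overline{S}(w^0,\gamma) - \underline{S}(w^0,\gamma) + 1\bigr)$. Since $def(w^0,\gamma) = 2^q$ and $d_{K_{2^{q+1}+1}}(w^0) = 2^{q+1}$, the very definition of vertex deficiency gives $\overline{S}(w^0,\gamma) - \underline{S}(w^0,\gamma) + 1 = 2^q + 2^{q+1} = 3\cdot 2^q$. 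Hence the desired difference is exactly $p\cdot 3 \cdot 2^q = 3n$, with equality, which is precisely the lower bound on $t$ in the statement of the corollary.

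Applying Theorem \ref{mytheorem2.5} now produces, for every integer $t$ with $3n \leq t \leq 3n + \frac{p-1}{2}$, a proper $t$-edge-coloring $\alpha$ of $K_{2n+1}$ with $def(v,\alpha)=def(v,\phi)$ for every $v\in V(K_{2n+1})$. Summing over the vertices gives $def(K_{2n+1},\alpha) = def(v_0,\phi) = n$, which is the desired conclusion. The argument is essentially bookkeeping and presents no real obstacle; the single point deserving care is the calculation verifying that the length of $[\underline{S}(v_0,\phi),\overline{S}(v_0,\phi)]$ is exactly $3n$, so that the interval of achievable $t$'s afforded by Theorem \ref{mytheorem2.5} matches the range claimed in the corollary.
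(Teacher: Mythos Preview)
Your proposal is correct and follows exactly the same approach as the paper: apply Theorem \ref{mytheorem2.5} to the coloring $\phi$ from Theorem \ref{mytheorem2.4} with $D=\{v_0\}$, after checking that $\overline{S}(v_0,\phi)-\underline{S}(v_0,\phi)+1=3n$. The only cosmetic difference is that the paper computes this length directly as $def(v_0,\phi)+d_{K_{2n+1}}(v_0)=n+2n=3n$, whereas you route the calculation through $w^0$ in $K_{2^{q+1}+1}$; both are valid and yield the same conclusion.
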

\begin{proof} It is enough to take a proper $\left(3n + \frac{p-1}{2}\right)$-edge-coloring $\alpha_0$ of the graph $K_{2n+1}$ from the proof of Theorem \ref{mytheorem2.4} and a set $D=\{v_0\}$, where $v_0\in V(K_{2n+1})$ and $def(v_0,\alpha_0)=def(K_{2n+1},\alpha_0)=n$, and notice that $\overline{S}(D,\alpha_0) - \underline{S}(D,\alpha_0) + 1=\overline{S}(v_0,\alpha_0) - \underline{S}(v_0,\alpha_0) + 1= def(v_0,\alpha_0)+d_{K_{2n+1}}(v_0) =n+2n =3n$.
\end{proof}
\bigskip

\section{The deficiency of certain graphs}\

This section is devoted to the deficiency of near-complete graphs
and some complete tripartite graphs. We begin our consideration with near-complete graphs. Recently, it was shown that the following result holds.

\begin{theorem}
\label{mytheorem3.1}\cite{B-OD-BHal} If $G$ is a graph with an odd number of vertices, then
\begin{center}
$def(G)\geq \frac{2\vert E(G)\vert -(\vert V(G)\vert
-1)\Delta(G)}{2}$.
\end{center}
\end{theorem}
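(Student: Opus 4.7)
The plan is to fix an arbitrary proper $t$-edge-colouring $\alpha$ of $G$ with $|V(G)|=2n+1$, and to establish the pointwise bound $def(G,\alpha)\ge |E(G)|-n\Delta(G)$, where $n=(|V(G)|-1)/2$. Taking the minimum over $\alpha$ then yields the claimed inequality on $def(G)$, since the right-hand side of the theorem equals $|E(G)|-n\Delta(G)$.

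The key structural input comes from the odd parity of $|V(G)|$: every matching in $G$ has size at most $n$, so each colour class of $\alpha$ contains at most $n$ edges. Summing over colours gives $|E(G)|=\sum_c n_c\le nt$, which forces $t\ge |E(G)|/n$. Moreover the handshake identity $\sum_v d_G(v)=2|E(G)|$ combined with $d_G(v)\le \Delta(G)$ yields $2|E(G)|\le(2n+1)\Delta(G)$, so $|E(G)|-n\Delta(G)\le \Delta(G)/2$; hence the claimed bound is a refinement of handshake.

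To turn the matching constraint into a deficiency lower bound, I would attach, at each $v\in V(G)$, exactly $def(v,\alpha)$ pendant edges coloured by the missing colours in $[\underline S(v,\alpha),\overline S(v,\alpha)]\setminus S(v,\alpha)$. The resulting graph $G'$ satisfies $|V(G')|=2n+1+def(G,\alpha)$ and $|E(G')|=|E(G)|+def(G,\alpha)$, and the extension $\alpha'$ of $\alpha$ is an interval $t$-colouring of $G'$. Each colour class of $\alpha'$ is still a matching, of size at most $\lfloor |V(G')|/2\rfloor=n+\lceil def(G,\alpha)/2\rceil$. Summing these matching-size bounds over all $t$ colours gives
\[
|E(G)|+def(G,\alpha)\le t\bigl(n+\lceil def(G,\alpha)/2\rceil\bigr),
\]
which should combine with $t\ge\Delta(G)$ (since $t\ge\chi'(G)\ge\Delta(G)$) and the handshake bound to produce $def(G,\alpha)\ge |E(G)|-n\Delta(G)$.

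The main obstacle is the final algebraic extraction. Because attaching pendant edges at a max-degree vertex can push $\Delta(G')$ strictly above $\Delta(G)$, a naive use of the above inequality with $t=\Delta(G)$ only delivers the weaker bound $def(G,\alpha)\ge 2(|E(G)|-tn)/(t-2)$, which equals $|E(G)|-n\Delta(G)$ only at $\Delta(G)=4$. Recovering the sharp bound therefore requires exploiting the odd-parity slack of $1$ coming from $|V(G)|=2n+1$ more directly, most likely via an augmented double count: each edge of $G$ beyond the $n\Delta(G)$-threshold should be paired with a specific missing colour in some vertex's spectrum, using that every colour class leaves at least one vertex uncovered. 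I expect the tight case $G=K_{2n+1}$ (where $|E(G)|-n\Delta(G)=n=def(G)$) to guide the choice of this pairing.
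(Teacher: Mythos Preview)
This theorem is not proved in the present paper; it is quoted from \cite{B-OD-BHal} and used as a black box to derive Corollary~\ref{mycorollary3.1}. So there is no proof here to compare your attempt against.

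That said, your proposal is not a proof but an outline with a gap you yourself flag. The pendant-edge construction together with the global matching bound
\[
|E(G)|+def(G,\alpha)\le t\bigl(n+\lceil def(G,\alpha)/2\rceil\bigr)
\]
is genuinely too weak: the right-hand side grows with $t$, and nothing in your argument controls $t$ from above, so no lower bound on $def(G,\alpha)$ can be extracted this way. Your suggested fix of ``pairing each excess edge with a missing colour'' is exactly the missing idea, but you have not supplied it; the final paragraph is speculation rather than argument. Note also that the preliminary remarks about $t\ge |E(G)|/n$ and the handshake inequality $|E(G)|-n\Delta(G)\le \Delta(G)/2$ are correct but never used, and the phrase ``refinement of handshake'' is misleading: the desired inequality does not follow from handshake alone.

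What is actually needed is a colour-by-colour deficiency count rather than a global matching bound on $G'$. For each colour $c$ the edges of colour $c$ cover at most $2n$ of the $2n+1$ vertices of $G$, so at least one vertex $v$ misses $c$; the issue is to certify that enough of these missed colours lie \emph{strictly inside} the interval $[\underline S(v,\alpha),\overline S(v,\alpha)]$ and hence contribute to $def(v,\alpha)$. This is the step your sketch gestures at but does not carry out, and it is where the appearance of $\Delta(G)$ (bounding the length of each $[\underline S(v,\alpha),\overline S(v,\alpha)]$ minus $def(v,\alpha)$) must enter. Until that counting is made precise, the argument is incomplete.
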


\begin{corollary}
\label{mycorollary3.1}\cite{B-OD-BHal} For any $n,m\in \mathbb{N}$, we have
\begin{center}
$def(K_{2n+1}-mK_{2})\geq n-m$, where $1\leq m\leq n$.
\end{center}
\end{corollary}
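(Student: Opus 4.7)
The plan is to apply Theorem \ref{mytheorem3.1} directly to the graph $G = K_{2n+1} - mK_{2}$, so the argument reduces to an arithmetic computation of the three parameters $|V(G)|$, $|E(G)|$, and $\Delta(G)$, followed by substitution.

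First I would record the basic invariants. Since $mK_{2}$ is a matching, removing it does not change the vertex set, so $|V(G)|=2n+1$, which is odd as required by Theorem \ref{mytheorem3.1}. The edge count is $|E(G)|=\binom{2n+1}{2}-m=n(2n+1)-m$. For the maximum degree, I would observe that the $2m$ endpoints of the removed matching lose exactly one unit of degree each, while all other $2n+1-2m$ vertices retain degree $2n$. The constraint $1\leq m\leq n$ ensures $2n+1-2m\geq 1$, so there is at least one vertex of degree $2n$; hence $\Delta(G)=2n$.

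Next I would plug these values into the bound of Theorem \ref{mytheorem3.1}:
\begin{equation*}
def(G)\geq \frac{2\bigl(n(2n+1)-m\bigr)-\bigl((2n+1)-1\bigr)\cdot 2n}{2}=\frac{2n(2n+1)-2m-4n^{2}}{2}=\frac{2n-2m}{2}=n-m,
\end{equation*}
which is exactly the claimed inequality. I do not anticipate any genuine obstacle here: the corollary is a bookkeeping exercise that simply exhibits $K_{2n+1}-mK_{2}$ as an instance where Theorem \ref{mytheorem3.1} can be evaluated in closed form, and the only thing one must verify carefully is that $\Delta(G)$ remains $2n$ under the hypothesis $m\leq n$, which follows from the matching condition as above.
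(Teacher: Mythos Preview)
Your proof is correct and follows exactly the approach implicit in the paper: the corollary is stated (with citation) immediately after Theorem~\ref{mytheorem3.1} without further argument, so the intended proof is precisely the direct substitution of $|V(G)|=2n+1$, $|E(G)|=n(2n+1)-m$, and $\Delta(G)=2n$ into that bound, just as you carried out.
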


In general, the lower bound on $def(K_{2n+1}-mK_{2})$ ($1\leq m\leq
n$) is not sharp, since for any maximum matching $M$ of $K_{2n+1}$
($n\geq 2$), $K_{2n+1} - M\notin \mathfrak{N}$ \cite{PetDM}, hence
$def(K_{2n+1}-nK_{2})>0$. However, the lower bound is sharp for
$K_{2n+1} - e$ as it was conjectured in \cite{B-OD-BHal}.

\begin{theorem}
\label{mytheorem3.2} If $n\in \mathbb{N}$, then $def(K_{2n+1}-e)=n-1$.
\end{theorem}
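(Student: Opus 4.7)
The lower bound $def(K_{2n+1}-e) \geq n-1$ follows immediately from Corollary~\ref{mycorollary3.1} applied with $m = 1$. For the matching upper bound my plan is to exhibit a proper $(3n-1)$-edge-coloring $\alpha$ of $K_{2n+1}-e$ with $def(K_{2n+1}-e,\alpha) = n-1$. Label the vertices so that $e = v_0 v_1$, fix a designated ``special'' vertex $v^* \in \{v_2, \ldots, v_{2n}\}$, set $\alpha(v_0 v^*) = 1$ and $\alpha(v_1 v^*) = 3n-1$, and for each remaining $i$ assign $\alpha(v_0 v_i) \in \{2,\ldots,2n-1\}$ as a bijection and $\alpha(v_1 v_i) = \alpha(v_0 v_i) + n - 1$. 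This forces $S(v_0,\alpha) = [1, 2n-1]$ and $S(v_1,\alpha) = [n+1, 3n-1]$, both intervals of length $2n-1$ (so each contributes deficiency $0$), and at $v^*$ the pair of colors $\{1, 3n-1\}$ already has span $3n-2$, exceeding the maximum span $2n-1$ of any interval of size $2n$, hence forcing $v^*$'s spectrum to have deficiency at least $n-1$.

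The next step is to color the edges of $K_{2n-1}$ on $\{v_2,\ldots,v_{2n}\}$ so that: (i) the $K_{2n-1}$-subspectrum of $v^*$ is $[2, n] \cup [2n, 3n-2]$ --- yielding $S(v^*,\alpha) = [1, n] \cup [2n, 3n-1]$, missing exactly the $n-1$ interior colors $\{n+1, \ldots, 2n-1\}$ and giving $def(v^*,\alpha) = n-1$ --- and (ii) the $K_{2n-1}$-subspectrum of each non-special $v_i$ combines with $\{\alpha(v_0 v_i), \alpha(v_1 v_i)\}$ to form a size-$2n$ interval (for instance $[2, 2n+1]$ when $\alpha(v_0 v_i) \leq n+2$, and the interval $[\alpha(v_1 v_i) - 2n + 1, \alpha(v_1 v_i)]$ otherwise), producing $def(v_i,\alpha) = 0$. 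Summing gives $def(K_{2n+1}-e,\alpha) = n - 1$, matching the lower bound.

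The main obstacle is verifying that the proper $K_{2n-1}$-coloring with the prescribed subspectra exists. A direct parity count confirms that for each color $c$ the number of vertices among $\{v_2,\ldots,v_{2n}\}$ whose prescribed subspectrum contains $c$ is even (so each color class can form a matching), but the simultaneous realization across all color classes needs more care. For small $n$ the coloring is built by hand --- for example, when $n = 3$ and $v^* = v_3$, the assignments $v_2 v_3 = 7$, $v_2 v_4 = 6$, $v_2 v_5 = 5$, $v_2 v_6 = 3$, $v_3 v_4 = 2$, $v_3 v_5 = 3$, $v_3 v_6 = 6$, $v_4 v_5 = 7$, $v_4 v_6 = 4$, $v_5 v_6 = 2$ give the required $K_5$-coloring and yield total deficiency $2 = n - 1$ --- and for general $n$ one proceeds by induction, starting from $n = 1$ (where $K_3 - e = P_3$ trivially has deficiency $0$) and extending each coloring of $K_{2n-1}-e$ to one of $K_{2n+1}-e$ by adjoining two new vertices and colouring the $4n-1$ new edges so that the new vertices acquire interval spectra and the existing deficient vertex gains exactly one additional interior hole.
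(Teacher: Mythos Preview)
Your lower bound and the overall target (a proper $(3n-1)$-edge-coloring with all deficiency concentrated at one vertex whose spectrum is $[1,n]\cup[2n,3n-1]$) match the paper exactly. The gap is in the upper-bound construction: you never actually produce the required edge-coloring of the $K_{2n-1}$ on $\{v_2,\dots,v_{2n}\}$ for general $n$. You yourself flag this as ``the main obstacle'', but the resolution you offer is only a parity check (a necessary, not sufficient, condition for realizing prescribed spectra), one worked example, and an induction whose step is a single sentence with no mechanism. In particular, passing from $K_{2n-1}-e$ to $K_{2n+1}-e$ does not merely add one hole to the deficient vertex: its spectrum must change from $[1,n-1]\cup[2n-2,3n-4]$ to $[1,n]\cup[2n,3n-1]$, so colors $2n-2$ and $2n-1$ must be removed and four new colors inserted. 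That forces recoloring of existing edges, not just coloring the $4n-1$ new ones, and you give no argument that this can be done while keeping every other spectrum an interval. As it stands, the proposal is a plausible outline, not a proof.

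By contrast, the paper avoids the realization problem entirely. It labels the missing edge as $e=v_1v_{2n}$, takes the subgraph on $\{v_1,\dots,v_{2n}\}$ (isomorphic to $K_{2n}-e$), and imports the explicit interval $(3n-2)$-coloring of $K_{2n}$ from \cite{PetDM}, whose vertex spectra are already known. After shifting all colors by one and assigning the $2n$ edges from $v_0$ explicitly, every $v_i$ with $i\geq 1$ has an interval spectrum and $S(v_0,\beta)=[1,n]\cup[2n,3n-1]$. The point is that the hard part---building a coloring with controlled spectra on the large clique---is outsourced to an existing construction rather than attempted from scratch. If you want to salvage your approach, the cleanest fix is to do the same: replace your unspecified $K_{2n-1}$-coloring by a concrete one with known spectra and verify the extension directly.
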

\begin{proof}
By Corollary \ref{mycorollary3.1}, we have $def(K_{2n+1}-e)\geq n-1$ for any $n\in \mathbb{N}$. For the proof, it suffices to construct a proper edge-coloring $\beta$ of $K_{2n+1}-e$ with deficiency $def(K_{2n+1}-e,\beta)=n-1$. Let
$V(K_{2n+1}-e)=\left\{v_{0},v_{1},\ldots,v_{2n}\right\}$. Without
loss of generality we may assume that $e=v_{1}v_{2n}$.

Define an edge-coloring $\beta$ of $K_{2n+1}-e$. For each edge
$v_{i}v_{j}\in E(K_{2n+1})$ with $i<j$ and $(i,j)\neq (1,2n)$,
define a color $\beta\left(v_{i}v_{j}\right)$ as follows:\\

$\beta\left(v_{i}v_{j}\right)=\left\{
\begin{tabular}{ll}
$1$, & if $i=0$, $j=1$;\\
$2n+1$, & if $i=0$, $j=2$;\\
$j-1$, & if $i=0$, $3\leq j\leq n$;\\
$n+1+j$, & if $i=0$, $n+1\leq j\leq 2n-2$;\\
$n$, & if $i=0$, $j=2n-1$;\\
$2n$, & if $i=0$, $j=2n$;\\
$i+j-1$, & if $1\leq i\leq \left\lfloor\frac{n}{2}\right\rfloor$,
$2\leq j\leq n$, $i+j\leq n+1$;\\
$i+j+n-2$, & if $2\leq i\leq n-1$, $\left\lfloor
\frac{n}{2}\right\rfloor +2\leq j\leq n$, $i+j\geq n+2$;\\
$n+1+j-i$, & if $3\leq i\leq n$, $n+1\leq j\leq 2n-2$, $j-i\leq
n-2$;\\
$j-i+1$, & if $1\leq i\leq n$, $n+1\leq j\leq 2n$, $j-i\geq n$;\\
$2i-1$, & if $2\leq i\leq 1+\left\lfloor
\frac{n-1}{2}\right\rfloor$, $n+1\leq j\leq n+\left\lfloor
\frac{n-1}{2}\right\rfloor$, $j-i=n-1$;\\
$i+j-1$, & if $\left\lfloor \frac{n-1}{2}\right\rfloor +2\leq i\leq
n$, $n+1+\left\lfloor \frac{n-1}{2}\right\rfloor\leq j\leq 2n-1$,
$j-i=n-1$;\\
$i+j-2n+1$, & if $n+1\leq i\leq n+\left\lfloor
\frac{n}{2}\right\rfloor -1$, $n+2\leq j\leq 2n-2$, $i+j\leq
3n-1$;\\
$i+j-n$, & if $n+1\leq i\leq 2n-1$, $n+\left\lfloor
\frac{n}{2}\right\rfloor +1\leq j\leq 2n$, $i+j\geq 3n$.
\end{tabular}%
\right.$\\

Let us prove that $\beta $ is a proper $(3n-1)$-edge-coloring of
$K_{2n+1}-e$ with deficiency $def(K_{2n+1}-e,\beta)=n-1$.

Let $G$ be the subgraph of $K_{2n+1}-e$ induced by
$\{v_{1},\ldots,v_{2n}\}$. Clearly, $G$ is isomorphic to $K_{2n}-e$.
This edge-coloring $\beta$ of $K_{2n+1}-e$ is constructed on the interval $(3n-2)$-coloring of $K_{2n}$ which is described in
the proof of Theorem 4 from \cite{PetDM}. We use this interval
$(3n-2)$-coloring of $K_{2n}$ and then we shift all colors of the
edges of $G$ by one. Let $\alpha$ be this edge-coloring of $G$.
Using the property of this edge-coloring which is described in the
proof of Corollary 6 from \cite{PetDM}, we get

\begin{description}
\item[1)] $S\left(v_{1},\alpha\right)=[2,2n-1]$ and $S\left(v_{2},\alpha\right)=[2,2n]$,

\item[2)] $S\left(v_{i},\alpha\right)=S\left(v_{n+i-2},\alpha\right)=[i,2n-2+i]$ for $3\leq i\leq n$,

\item[3)]
$S\left(v_{2n-1},\alpha\right)=[n+1,3n-1]$ and
$S\left(v_{2n},\alpha\right)=[n+1,3n-1]\setminus\{2n\}$.
\end{description}

Now, by the definition of $\beta$, we have

\begin{description}
\item[1)] $S\left(v_{0},\beta\right)=[1,n]\cup [2n,3n-1]$,

\item[2)] $S\left(v_{1},\beta\right)=[1,2n-1]$ and $S\left(v_{2},\beta\right)=[2,2n+1]$,

\item[3)] $S\left(v_{i},\beta\right)=[i-1,2n-2+i]$ and $S\left(v_{n+i-2},\beta\right)=[i,2n-1+i]$ for $3\leq i\leq n$,

\item[4)]
$S\left(v_{2n-1},\beta\right)=[n,3n-1]$ and
$S\left(v_{2n},\beta\right)=[n+1,3n-1]$.
\end{description}

This shows that $\beta $ is a proper $(3n-1)$-edge-coloring of
$K_{2n+1}-e$ with deficiency $def(K_{2n+1}-e,\beta)=n-1$. Hence,
$def(K_{2n+1}-e)=n-1$. (see Fig. 2).
\end{proof}

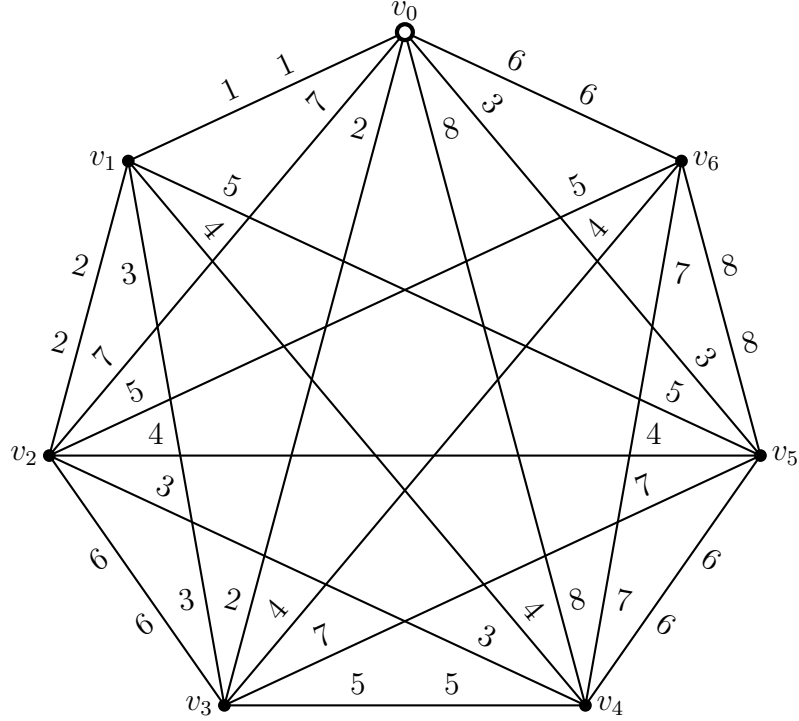
\begin{figure}
  \begin{center}
  \begin{tikzpicture}[style=thick]
    \coordinate (V3) at (-120:4.8cm);
    \coordinate (V2) at (-170:4.8cm);
    \coordinate (V1) at (140:4.8cm);
    \coordinate (V0) at (90:4.8cm);
    \coordinate (V4) at (-60:4.8cm);
    \coordinate (V5) at (-10:4.8cm);
    \coordinate (V6) at (40:4.8cm);

    \draw (V0) -- node [sloped,above,pos=0.4] {$1$} node [sloped,above,pos=0.6] {$1$} (V1);
    \draw (V0) -- node [sloped,above,pos=0.2] {$7$} node [sloped,above,pos=0.8] {$7$} (V2);
    \draw (V0) -- node [sloped,left,rotate=-90,pos=0.15] {$2$} node [sloped,left,rotate=-90,pos=0.85] {$2$} (V3);
    \draw (V0) -- node [sloped,right,rotate=90,pos=0.15] {$8$} node [sloped,right,rotate=90,pos=0.85] {$8$} (V4);
    \draw (V0) -- node [sloped,above,pos=0.2] {$3$} node [sloped,above,pos=0.8] {$3$} (V5);
    \draw (V0) -- node [sloped,above,pos=0.37] {$6$} node [sloped,above,pos=0.63] {$6$} (V6);

    \draw (V1) -- node [sloped,left,rotate=-90,pos=0.37] {$2$} node [sloped,left,rotate=-90,pos=0.63] {$2$} (V2);
    \draw (V1) -- node [sloped,left,rotate=90,pos=0.2] {$3$} node [sloped,left,rotate=90,pos=0.8] {$3$} (V3);
    \draw (V1) -- node [sloped,above,pos=0.15] {$4$} node [sloped,above,pos=0.85] {$4$} (V4);
    \draw (V1) -- node [sloped,above,pos=0.15] {$5$} node [sloped,above,pos=0.85] {$5$} (V5);

    \draw (V2) -- node [sloped,left,rotate=90,pos=0.37] {$6$} node [sloped,left,rotate=90,pos=0.63] {$6$} (V3);
    \draw (V2) -- node [sloped,above,pos=0.2] {$3$} node [sloped,above,pos=0.8] {$3$} (V4);
    \draw (V2) -- node [sloped,above,pos=0.15] {$4$} node [sloped,above,pos=0.85] {$4$} (V5);
    \draw (V2) -- node [sloped,above,pos=0.15] {$5$} node [sloped,above,pos=0.85] {$5$} (V6);

    \draw (V3) -- node [sloped,above,pos=0.37] {$5$} node [sloped,above,pos=0.63] {$5$} (V4);
    \draw (V3) -- node [sloped,above,pos=0.2] {$7$} node [sloped,above,pos=0.8] {$7$} (V5);
    \draw (V3) -- node [sloped,above,pos=0.15] {$4$} node [sloped,above,pos=0.85] {$4$} (V6);

    \draw (V4) -- node [sloped,right,rotate=-90,pos=0.37] {$6$} node [sloped,right,rotate=-90,pos=0.63] {$6$} (V5);
    \draw (V4) -- node [sloped,right,rotate=-90,pos=0.2] {$7$} node [sloped,right,rotate=-90,pos=0.8] {$7$} (V6);

    \draw (V5) -- node [sloped,right,rotate=90,pos=0.37] {$8$} node [sloped,right,rotate=90,pos=0.63] {$8$} (V6);

    \draw[fill=white,style=ultra thick] (V0) circle (3pt) node [above] {$v_0$};
    \draw[fill=black] (V1) circle (2pt) node [left] {$v_1$};
    \draw[fill=black] (V2) circle (2pt) node [left] {$v_2$};
    \draw[fill=black] (V3) circle (2pt) node [left] {$v_3$};
    \draw[fill=black] (V4) circle (2pt) node [right] {$v_4$};
    \draw[fill=black] (V5) circle (2pt) node [right] {$v_5$};
    \draw[fill=black] (V6) circle (2pt) node [right] {$v_6$};
  \end{tikzpicture}
  \end{center}
  \caption{Proper $8$-edge-coloring $\beta$ of $K_7-e$ with $def( v_0,\beta)=def(K_7-e)=2$.}
  \label{K7-minus-edge}
\end{figure}

\begin{theorem}
\label{mytheorem3.3} If $n\in \mathbb{N}$, then
$w_{def}(K_{2n+1}-e)=3n-1$.
\end{theorem}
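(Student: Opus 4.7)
The plan is to sandwich $w_{def}(K_{2n+1}-e)$ between $3n-1$ and $3n-1$, using Theorem~\ref{mytheorem3.2} and Theorem~\ref{mytheorem2.3} respectively.

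For the upper bound, I would simply invoke the construction already produced in the proof of Theorem~\ref{mytheorem3.2}: the edge-coloring $\beta$ there is a proper $(3n-1)$-edge-coloring of $K_{2n+1}-e$ with $def(K_{2n+1}-e,\beta)=n-1=def(K_{2n+1}-e)$. By the definition of $w_{def}$, this immediately yields $w_{def}(K_{2n+1}-e)\leq 3n-1$.

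For the matching lower bound, I would apply Theorem~\ref{mytheorem2.3}. Since $K_{2n+1}-e$ has $2n+1$ vertices, an odd number, it admits no perfect matching, so the hypothesis of Theorem~\ref{mytheorem2.3} is satisfied. The minimum degree is $\delta(K_{2n+1}-e)=2n-1$ (attained at the two endpoints of $e$), and by Theorem~\ref{mytheorem3.2} we have $def(K_{2n+1}-e)=n-1$. Substituting,
\begin{equation*}
w_{def}(K_{2n+1}-e)\;\geq\; 2\delta(K_{2n+1}-e)-def(K_{2n+1}-e)\;=\;2(2n-1)-(n-1)\;=\;3n-1.
\end{equation*}

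Combining the two bounds gives $w_{def}(K_{2n+1}-e)=3n-1$, as desired. There is no real obstacle here; the proof is essentially a two-line consequence of the previously established results, with the only slight care being to correctly identify $\delta(K_{2n+1}-e)=2n-1$ (and to observe that the degenerate case $n=1$, where $K_3-e\cong P_3$ gives $def=0$ and $w_{def}=2=3\cdot 1-1$, is consistent).
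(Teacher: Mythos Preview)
Your proof is correct and follows exactly the same approach as the paper: the upper bound comes from the explicit $(3n-1)$-edge-coloring constructed in Theorem~\ref{mytheorem3.2}, and the lower bound is obtained by applying Theorem~\ref{mytheorem2.3} with $\delta(K_{2n+1}-e)=2n-1$ and $def(K_{2n+1}-e)=n-1$. The only difference is cosmetic (you write $2(2n-1)-(n-1)$ where the paper writes $2(2n-1)-n+1$), and your remark about the $n=1$ case is a harmless addition.
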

\begin{proof}
From the proof of Theorem \ref{mytheorem3.2}, we have that
$K_{2n+1}-e$ has a proper $(3n-1)$-edge-coloring $\alpha$ with
deficiency $def(K_{2n+1}-e,\alpha)=def(K_{2n+1}-e)=n-1$. This
implies that $w_{def}(K_{2n+1}-e)\leq 3n-1$. On the other hand,
since $K_{2n+1}-e$ has no perfect matching,
$\delta(K_{2n+1}-e)=2n-1$ and $def(K_{2n+1}-e)=n-1$, by Theorem
\ref{mytheorem2.3}, we obtain $w_{def}(K_{2n+1}-e)\geq
2(2n-1)-n+1=3n-1$.
\end{proof}

In \cite{FengHuang}, it was shown that $def\left(K_{1,1,n}\right)=0$ if $n$ is even, and $def\left(K_{1,1,n}\right)=1$ if $n$ is odd.
Here we generalize this result and determine the deficiency of
$K_{1,m,n}$ for any $m,n\in \mathbb{N}$.

\begin{theorem}
\label{mytheorem3.4} For any $m,n\in \mathbb{N}$, we have
\begin{center}
$def\left(K_{1,m,n}\right)=\left\{
\begin{tabular}{ll}
$0$, & if $\gcd(m+1,n+1)=1$,\\
$1$, & otherwise.\\
\end{tabular}%
\right.$
\end{center}
\end{theorem}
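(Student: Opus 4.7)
The plan is to split on the value of $\gcd(m+1,n+1)$. If $\gcd(m+1,n+1)=1$, then by Theorem \ref{mytheorem1.5} we have $K_{1,m,n}\in\mathfrak{N}$, so $def(K_{1,m,n})=0$. If $\gcd(m+1,n+1)\geq 2$, Theorem \ref{mytheorem1.5} gives $K_{1,m,n}\notin\mathfrak{N}$, so $def(K_{1,m,n})\geq 1$, and the task reduces to exhibiting a proper edge-coloring $\alpha$ of $K_{1,m,n}$ with $def(K_{1,m,n},\alpha)=1$.

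For the upper bound I would label the vertices as $V(K_{1,m,n})=\{a\}\cup\{b_1,\ldots,b_m\}\cup\{c_1,\ldots,c_n\}$ and define
\[
\alpha(ab_i)=i,\qquad \alpha(ac_j)=m+1+j,\qquad \alpha(b_ic_j)=i+j,
\]
for $1\leq i\leq m$ and $1\leq j\leq n$. A direct inspection at each vertex shows that $\alpha$ is a proper edge-coloring with $S(b_i,\alpha)=[i,i+n]$ and $S(c_j,\alpha)=[j+1,j+m+1]$ both intervals, while $S(a,\alpha)=[1,m]\cup[m+2,m+n+1]$ is an interval missing only the single color $m+1$. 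Hence $def(b_i,\alpha)=def(c_j,\alpha)=0$ for all $i,j$ and $def(a,\alpha)=1$, so $def(K_{1,m,n},\alpha)=1$, matching the lower bound.

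No serious obstacle is expected. The key observation is that on the bipartite part $K_{m,n}$ the standard coloring $(b_i,c_j)\mapsto i+j$ already produces an interval as the spectrum of every $b_i$ and every $c_j$; one then only needs to prepend the color $i$ at $b_i$ and append the color $m+1+j$ at $c_j$ so that the two blocks of colors incident to $a$ do not collide, forcing the single gap $\{m+1\}$ at $a$. Notably, the same construction yields deficiency exactly $1$ regardless of the value of $\gcd(m+1,n+1)$, so establishing the formula in the theorem relies entirely on Theorem \ref{mytheorem1.5} to supply $def(K_{1,m,n})=0$ when $\gcd(m+1,n+1)=1$ and the lower bound $def(K_{1,m,n})\geq 1$ otherwise.
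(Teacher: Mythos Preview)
Your proposal is correct and essentially identical to the paper's own proof: both invoke Theorem~\ref{mytheorem1.5} for the lower bound and construct the same explicit $(m+n+1)$-edge-coloring (with $\alpha(ab_i)=i$, $\alpha(ac_j)=m+1+j$, $\alpha(b_ic_j)=i+j$) yielding deficiency~$1$ concentrated at the singleton vertex. The only difference is notation for the vertices.
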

\begin{proof} By Theorem \ref{mytheorem1.5}, we have $K_{1,m,n}\in \mathfrak{N}$ if and only if
$\gcd(m+1,n+1)=1$ for any $m,n\in \mathbb{N}$. This implies that
$def\left(K_{1,m,n}\right)=0$ if $\gcd(m+1,n+1)=1$, and
$def\left(K_{1,m,n}\right)\geq 1$ if $\gcd(m+1,n+1)>1$. We now show that $def\left(K_{1,m,n}\right)\leq 1$.

Let
$V\left(K_{1,m,n}\right)=\{u_{1},\ldots,u_{m},v_{1},\ldots,v_{n},w\}$
and $E\left(K_{1,m,n}\right)=\left\{u_{i}v_{j}\colon\,1\leq i\leq
m,1\leq j\leq n\right\}\cup \{wu_{i}\colon\,1\leq i\leq m\}\cup
\{wv_{j}\colon\,1\leq j\leq n\}$.\\

Define an edge-coloring $\alpha$ of $K_{1,m,n}$ as follows:

\begin{description}
\item[(1)] for $1\leq i\leq m$ and $1\leq j\leq n$, let
$\alpha\left(u_{i}v_{j}\right)=i+j$,

\item[(2)] for $1\leq i\leq m$, let $\alpha\left(wu_{i}\right)=i$,

\item[(3)] for $1\leq j\leq n$, let $\alpha\left(wv_{j}\right)=m+1+j$.
\end{description}

Let us prove that $\alpha$ is a proper $(m+n+1)$-edge-coloring of
$K_{1,m,n}$ with deficiency $def\left(K_{1,m,n},\alpha\right)=1$.\\

By the definition of $\alpha$, we have
\begin{description}
\item[1)] for $1\leq i\leq m$,

$S\left(u_{i},\alpha\right)=[i+1,n+i]\cup \{i\}=[i,n+i]$ due to (1) and (2),

\item[2)] for $1\leq j\leq n$,

$S\left(v_{j},\alpha\right)=[j+1,m+j]\cup \{m+1+j\}=[j+1,m+1+j]$ due to (1) and (3),

\item[3)] $S\left(w,\alpha\right)=[1,m]\cup [m+2,m+n+1]$
due to (2) and (3).
\end{description}

This implies that $\alpha$ is a proper $(m+n+1)$-edge-coloring of
$K_{1,m,n}$ with deficiency $def\left(K_{1,m,n},\alpha\right)=1$
($m,n\in \mathbb{N}$); thus $def\left(K_{1,m,n}\right)\leq 1$ if
$\gcd(m+1,n+1)>1$.
\end{proof}
\bigskip

\section{Concluding Remarks}\

In 1999, Giaro, Kubale and Ma\l afiejski \cite{GiaroKubaleMalaf1} showed that there are bipartite graphs whose deficiency approaches the number of vertices. On the other hand, in \cite{BouchHertzDesau} Bouchard, Hertz, Desaulniers suggested the following conjecture.

\begin{conjecture} For every graph $G$,
\begin{center}
$def(G)\leq \vert V(G)\vert$.
\end{center}
\end{conjecture}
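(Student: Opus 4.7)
The conjecture asks for $def(G) \leq |V(G)|$, equivalently that $G$ becomes interval colorable after attaching at most $|V(G)|$ pendant edges. My first plan is to seek a proper edge-coloring $\alpha$ of $G$ in which every vertex has deficiency at most $1$, since then $def(G,\alpha) \leq |V(G)|$. A natural starting point is a Vizing $(\Delta(G)+1)$-edge-coloring, followed by repeated Kempe chain swaps on color pairs $(i,i+1)$ aimed at shrinking the interval $[\underline{S}(v,\alpha),\overline{S}(v,\alpha)]$ whenever some vertex has deficiency at least $2$. The goal would be an \emph{interval-repair lemma}: each such swap strictly decreases $def(v,\alpha)$ at some target vertex while increasing the deficiency at every other vertex by at most a bounded amount, so that iterating until no improvement is possible leaves every vertex with deficiency at most $1$.

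The main obstacle will be controlling the side effects of Kempe swaps. A swap on colors $(i,i+1)$ changes the spectrum at every vertex incident to the affected two-color subgraph, so an operation that fills a gap at one vertex can create larger gaps at many others. Designing a global potential function, say $\sum_v def(v,\alpha)$ augmented by a term penalising wide spectra, that monotonically decreases under carefully chosen swaps, is the crux of the argument. Past experience with related edge-coloring problems suggests that no single local rule will suffice, and one may need to combine Vizing fans with shifting operations tailored to each vertex of low degree.

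If the direct approach resists, I would fall back on a structural reduction: assume $G$ is a minimal counterexample and exploit Theorem \ref{mytheorem3.1} together with Corollaries \ref{mycorollary2.1} and \ref{mycorollary2.2} to rule out triangle-free, bipartite and planar candidates, forcing any counterexample to be dense with high minimum degree. For such $G$ one could adapt the composition construction from the proof of Theorem \ref{mytheorem2.4}, blending an edge-coloring on a small core with interval colorings on complete bipartite layers (via Lemma \ref{Kpp}) so that the overall deficiency is concentrated on a single vertex of the core and stays linear in $|V(G)|$. Since Schwartz's bounds already give $def(G) = O(|V(G)|)$ for regular graphs, the remaining task is to push the constant below $1$; the hardest subcase will be dense Class~$2$ graphs with a wide degree spread, where Vizing colorings force long spectra at high-degree vertices and a priori large gaps at low-degree ones, and a substantially new averaging argument exploiting the fact that missing colors at adjacent vertices must coincide along their common edge appears necessary.
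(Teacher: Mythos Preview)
The statement you are attempting to prove is listed in the paper as an \emph{open conjecture} (Conjecture~5.1 in Section~5), not as a theorem: the authors explicitly write ``The conjecture is still open, but it holds for regular graphs, some bipartite graphs and outerplanar graphs.'' Consequently there is no proof in the paper to compare your proposal against.

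As to the proposal itself, it is a research programme rather than a proof, and several of the tools you invoke do not do what you claim. Corollaries~\ref{mycorollary2.1}--\ref{mycorollary2.3} bound $W_{def}(G)$ in terms of $\vert V(G)\vert+def(G)$; they say nothing about $def(G)$ on its own and therefore cannot ``rule out triangle-free, bipartite and planar candidates'' for a minimal counterexample. Likewise Theorem~\ref{mytheorem3.1} is a \emph{lower} bound on $def(G)$, so it cannot help you force an upper bound. Your primary line---finding a proper edge-coloring in which every vertex has deficiency at most~$1$ via iterated Kempe swaps---is a natural idea, but you yourself identify the obstruction: a single swap on colors $(i,i+1)$ can increase the deficiency at many vertices simultaneously, and no potential function is exhibited that decreases under the proposed moves. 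Until such a potential (or an alternative termination argument) is produced, this remains a heuristic rather than a proof, which is consistent with the conjecture's open status.
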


The conjecture is still open, but it holds for regular graphs, some bipartite graphs and outerplanar graphs \cite{Schwartz,GiaroKubaleMalaf1,B-OD-B,KhachOuterplanar}. If the conjecture is true, then, by Corollary \ref{mycorollary2.1}, we obtain that the upper bound $W_{def}(G)\leq 2\vert V(G)\vert -1$ holds for every  triangle-free graph $G$. Moreover, if the conjecture is true, then, by the result of Altinakar, Caporossi and Hertz \cite{AltinCaporHertz}, we derive that the bound $W_{def}(G)\leq 3\vert V(G)\vert -4$ holds for every graph $G$ with at least three vertices.

In Section 3, we obtained some results on the parameters $w_{def}(K_{2n+1})$ and $W_{def}(K_{2n+1})$. In particular, we proved that the difference $W_{def}(K_{2n+1}) - w_{def}(K_{2n+1})$ can be arbitrarily large. On the other hand, we cannot find a proper edge-coloring of $K_{2n+1}$ with more than $3n$ colors having minimum deficiency when $n$ is a power of two. So we would like to suggest the following conjecture.

\begin{conjecture} For any $q\in \mathbb{Z}_+$,
\begin{center}
$w_{def}\left(K_{2^{q+1}+1}\right)=W_{def}\left(K_{2^{q+1}+1}\right)=3\cdot2^q$.
\end{center}
\end{conjecture}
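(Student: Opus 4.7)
Set $n = 2^q$ and $K = K_{2n+1}$. The lower bound $W_{def}(K) \geq w_{def}(K) \geq 3n$ follows immediately from Theorem \ref{mytheorem2.3}: since $K$ has no perfect matching, $\delta(K) = 2n$, and $def(K) = n$, we obtain $w_{def}(K) \geq 2\cdot 2n - n = 3n$. The substance of the conjecture is the matching upper bound $W_{def}(K) \leq 3n$, i.e.\ that any proper $t$-edge-coloring $\alpha$ of $K$ with $def(K,\alpha) = n$ must satisfy $t \leq 3n$. A weaker bound $W_{def}(K) \leq 5n - 2$ is already implied by the Altinakar--Caporossi--Hertz inequality $W_{def}(G) \leq 2|V(G)| - 4 + def(G)$ cited at the start of Section 3 \cite{AltinCaporHertz}, so it is enough to exclude the range $3n+1 \leq t \leq 5n-2$.

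The plan is a proof by contradiction. Suppose $\alpha$ is a proper $t$-edge-coloring of $K$ with $def(K,\alpha) = n$ and $t \geq 3n + 1$, and define $A = \{v : \underline{S}(v,\alpha) = 1\}$ and $B = \{v : \overline{S}(v,\alpha) = t\}$. The sets $A$ and $B$ are precisely the vertex-sets of the color classes of $1$ and $t$, so each has even cardinality at most $2n$. If $A \cap B \neq \emptyset$, any $v \in A \cap B$ satisfies $t = \overline{S}(v,\alpha) - \underline{S}(v,\alpha) + 1 \leq d_K(v) + def(v,\alpha) \leq 2n + n = 3n$, a contradiction. Hence $A \cap B = \emptyset$, and since $|V(K)| = 2n + 1$ is odd while $|A| + |B|$ is even, in fact $|A| + |B| \leq 2n$. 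Moreover, $S(v,\alpha) \subseteq [1, 3n]$ for every $v \in A$ and $S(v,\alpha) \subseteq [t-3n+1, t]$ for every $v \in B$, so the $|A|\cdot|B|$ edges of $K$ joining $A$ and $B$ must all receive colors from the at most $6n - t$ colors in $[t-3n+1, 3n]$; since each such color class is a matching of size at most $\min(|A|,|B|)$, this yields the further constraint $\max(|A|,|B|) \leq 6n - t$.

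The crux, and the main obstacle, is to derive a contradiction in this residual disjoint case in a way that genuinely uses $n = 2^q$. Elementary double counting seems too loose: the constraints collected above remain jointly satisfiable for values of $t$ well above $3n$. I would therefore pursue an inductive strategy on $q$, running in reverse the product-type construction from the proof of Theorem \ref{mytheorem2.4}. Specifically, I would try to exploit the constraints above to single out a canonical ``defect'' vertex $v_{0} \in V(K) \setminus (A \cup B)$ carrying the bulk of the deficiency, analogous to the $v_{0}$ appearing in the construction of Theorem \ref{mytheorem2.4}, and then show that $\alpha$ restricted to an appropriate ``half'' of $K$ descends to a proper edge-coloring of $K_{n+1} = K_{2m+1}$ (with $m = 2^{q-1}$) of deficiency $m$ using more than $3m$ colors, contradicting the inductive hypothesis $W_{def}(K_{2m+1}) \leq 3m$. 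The base case $q = 0$ is trivial, since $K_{3}$ has only three edges. A prudent preliminary step is to verify the conjecture directly for $q = 1$ — that is, to establish $W_{def}(K_{5}) = 6$ by finite case analysis on proper edge-colorings of $K_{5}$ with deficiency $2$; this should both test the inductive scheme and expose which additional structural constraints, beyond $|A| + |B| \leq 2n$ and $\max(|A|,|B|) \leq 6n - t$, are ultimately needed to force the decomposition.
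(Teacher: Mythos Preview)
The statement you are attempting to prove is presented in the paper as an open \emph{conjecture}, not as a theorem; the paper offers no proof and explicitly says ``we cannot find a proper edge-coloring of $K_{2n+1}$ with more than $3n$ colors having minimum deficiency when $n$ is a power of two.'' So there is no ``paper's own proof'' to compare against, and any correct argument would in fact resolve an open problem.

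Your write-up is not a proof but a plan, and you are candid about this. The lower bound $w_{def}(K_{2^{q+1}+1})\geq 3\cdot 2^q$ and the preliminary observations about the sets $A$ and $B$ (disjointness, $|A|+|B|\leq 2n$, $\max(|A|,|B|)\leq 6n-t$) are correct, but as you yourself note, these constraints alone do not yield a contradiction. The substantive gap is the proposed inductive descent. The construction in Theorem~\ref{mytheorem2.4} builds a coloring of $K_{p2^{q+1}+1}$ \emph{from} colorings of $K_{p+1}$, $K_{p,p}$, and $K_{2^{q+1}+1}$ via a product-type composition; reversing this direction requires that an \emph{arbitrary} minimum-deficiency coloring $\alpha$ of $K_{2^{q+1}+1}$ decompose along some $2$-partition of the non-defect vertices so that the induced coloring on one half is (after relabeling) a minimum-deficiency coloring of $K_{2^{q}+1}$ using more than $3\cdot 2^{q-1}$ colors. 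There is no mechanism offered for producing such a partition, and a generic $\alpha$ has no reason to respect any product structure: colors on edges inside a chosen $(n{+}1)$-subset can be scattered throughout $[1,t]$, and the deficiency restricted to that subset is uncontrolled. Identifying a ``canonical defect vertex $v_0$'' is also not straightforward, since the total deficiency $n$ may be spread over many vertices rather than concentrated at one. Until you can exhibit a concrete decomposition lemma --- or at minimum carry out the $q=1$ case in full, as you suggest --- this remains a heuristic rather than a proof.
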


Finally, we would like to introduce the concept of deficiency-critical graphs, since it will be very useful for investigating deficiency of graphs. A graph $G$ with $def(G)=k>0$  is \emph{$k$-deficiency-critical} if $def(G-e)<def(G)$ for every edge $e$ in $G$. Clearly, odd cycles $C_{2n+1}$ ($n\in \mathbb{N}$) are $1$-deficiency-critical graphs. In fact, the conjecture of Borowiecka-Olszewska, Drgas-Burchardt and Ha\l uszczak can be reformulated as follows: all complete graphs $K_{2n+1}$ ($n\in \mathbb{N}$) are $n$-deficiency-critical. Here we would like to suggest the following problem.

\begin{problem}
Characterize all $k$-deficiency-critical graphs.
\end{problem}
\bigskip

\begin{acknowledgement}
We would like to thank both referees for many useful comments and suggestions. The work was made possible by a research grant from the Armenian National Science and Education Fund (ANSEF) based in New York, USA.
\end{acknowledgement}
\bigskip

\end{document}